\theoremstyle{plain}
\newtheorem{thm}{Theorem}[section]
\newtheorem{lemma}[thm]{Lemma}
\newtheorem{rmk}[thm]{Remark}
\newtheorem{prop}[thm]{Proposition}
\theoremstyle{definition}
\newtheorem{example}[thm]{Example}
\newtheorem{defn}[thm]{Definition}
\author{Yubo TONG}
\title{The Chevalley-Weil formula on nodal curves}
\date{\today}
\address{Xiamen University  \\ School of Mathematical Sciences \\ Siming South Road 422 \\ Xiamen, Fujian 361005 (China)}
\email{tongyubo@stu.xmu.edu.cn}
\begin{document}

\maketitle
\begin{quote}
\begin{abstract}
 In this paper, we study the eigensubspace of the space of the holomorphic differentials of nodal curves over the algebracally closed field under the action of finite automorphism groups. We compute the Chevalley-Weil formula with some additional contidions of the quotient curve and give some examples.
 \end{abstract}
\end{quote}

\tableofcontents

\section{Introduction}

Let $X$ be a connected projective smooth curve over an algebraically closed field $k$ and $G\subseteq \operatorname{Aut}(X)$ be a finite subgroup. Then $G$ acts in a natural way on the space of the holomorphic differentials on $X$, thus we obtain a linear representation $G \rightarrow \operatorname{GL}\left(H^{0}(X, \omega_{X})\right)$. A basic problem is to determine how many times a given irreducible representation of $G$ occurs in $H^{0}(X, \omega_{X})$.

This problem was first considered by Hurwitz \cite{HZ} for $G$ cyclic over $k=\mathbb{C}$. Then in the $30$s of the 20th century, Chevalley and Weil \cite{CW} solved this problem for general $G$ when $\pi:X\rightarrow X/G$ is unramified. Soon after, Weil \cite{W} solved the case for general $\pi$.
This result was named as the Chevalley-Weil formula and it remains valid for any algebraically closed field $k$ with $\operatorname{char}(k)=p \nmid \#G$ \cite{Kani}.

When $\operatorname{char}(k)=p>0$ and $p\mid \#G$, the structure of $H^{0}(X, \Omega_{X})$ becomes more complicated. Except the tame ramification case
(\cite{Kani}, \cite{N2}), or weakly ramified case(\cite{Kock}), people focus on some special groups (\cite{V} for the case
of cyclic groups, \cite{MW} for abelian groups, \cite{J} for $p$-groups or \cite{BC} for groups with a cyclic Sylow subgroup).

In the 1980s, Kani studied the projectivity of the logarithmic differentials space $H^{0}(X, \Omega_{X}(D))$ as $k[G]-$module in the tamely ramified case\cite{Kani}. But most of his work was covered by Nakajima's work(\cite{N1},\cite{N2}). The latter improved Mumford's method\cite[$\uppercase\expandafter{\romannumeral2} .5$ Lemma 1]{Mumford} to study the $H^{i}\left(X, \mathcal{G}\right)$ of the coherent $G$-sheaf $\mathcal{G}$ in the tamely ramified Galois covering for any dimensional projective varieties. Nevertheless, Kani's work gives us sereval valuable tools.

For smooth curves, the Chevalley-Weil formula was well understood by now.
In this paper, we will follow Kani's methods, and generalize the Chevalley-Weil formula to the nodal curves for one-dimensional $G$-representations with $\operatorname{char}(k)=0$ or prime to $\#G$.\\

\noindent\textbf{Acknowledgements}.
I would like to thank my supervisor Wenfei~Liu for his support.
I am grateful to Professor Qing~Liu for writing advice and helpful discussions during my visit at University of Bordeaux. This work has been supported by the NSFC (No.~11971399) and by the Presidential Research Fund of Xiamen University(No.~20720210006).

\section{Preliminary}
\subsection{Notations}\label{notations}
In this paper, we consider a finite group $G$ acting faithfully on a nodal curve $X$ over an algebraically closed field $k$. Let $\#G=n$ and $\operatorname{char}(k)=p \nmid n$ or $\operatorname{char}(k)=0$, which implies that $k[G]$ is semi-simple. A \emph{curve} means an equidimensional reduced projective scheme of finite type of dimension $1$ over $k$.

Let $X$ be a nodal curve, $\omega_X$ the canonical (dualizing) sheaf of $X$. Let  $\hat{X}\overset{\alpha}{\rightarrow} X$ be the normalizaiton of $X$, then it induces an immersion $H^0(X,\omega_X) \hookrightarrow H^{0}(\hat{X}, \Omega_{\hat{X}}(\hat{S}_X))$, where $\hat{S}_X$ is the preimage of singularities(nodes) of X. For a node $P\in \alpha(\hat{S}_X)$, we say $\{P_1,P_2\}=\alpha^{-1}(P)$ a \emph{pair} of $P$.

An element $\varphi_0\in H^0(X,\Omega_{\hat{X}}(\hat{S}_X))$ belongs to $H^0(X,\omega_X)$ if and only if $\operatorname{Res}_{P_1}\varphi_0+\operatorname{Res}_{P_2}\varphi_0=0$ for any pair $\{P_1,P_2\}$. Such an element is called a holomorphic differential of $X$. It is known that $H^0(X,\omega_X)$ is a $k$-vector space of dimension $p_a$, the arithmetic genus of $X$.

Both the rational function field $K(X)$ and $H^0(X,\omega_X)$ are naturally (right)$k[G]$-modules, and every $1$-dimensional representation is its character. Our goal is to compute the multiplicity of any $1$-dimensional representation $\chi$, that is the dimension of $H^0(X,\omega _X)_{\chi}$ over $k$.
Note that all the irreducible representations will be $1$-dimensional when $G$ is abelian.
\\

Let $X$ be smooth for the rest of this section. Now we recall some properties for smooth curves.\\

Consider the branched cover $\pi:X\rightarrow X/G=Y$ and let $e_P$ be the ramification index at $P\in X$, then
we have the \emph{ramification divisor} 
$$R_{\pi}=\sum_{P\in X}(e_P -1)P.$$
For a divisor $D=\sum a_i P_i \in \operatorname{Div}(X)$, define $\pi_*D \in \operatorname{Div}(Y)$ by
$$
\pi_*D =\sum a_i \pi(P_i).
$$

If $D=\sum a_i Q_i \in \operatorname{Div}(Y)$ is a divisor and $r \in \mathbb{R}$, then define $\lfloor rD \rfloor \in \operatorname{Div}(Y)$ by
	$$
	\lfloor r D \rfloor=\sum \lfloor r a_i \rfloor Q_i,
	$$
	where $\lfloor r a_i \rfloor$ denotes the greatest integer $\leq  r a_i$. And define $\pi^*D \in \operatorname{Div}(Y)$ by
    $$
    \pi^*D =\sum_i a_i(\sum_{P\in \pi^{-1}(Q_i)}  e_P ~ P).
    $$

\begin{prop}[Kani \cite{Kani}]\label{Kani equality} Let $G$ be a finite group acting on a smooth curve $X$ with $R_{\pi}$ the ramification divisor of
	 $\pi: X \rightarrow X/G=Y$. Consider a $G$-invariant divisor $D \in \operatorname{Div}(X)$, then for the trivial character $\chi=1_G$, we have
 \begin{align}\label{G1}
	H^{0}(X, \mathcal{ O}_X(D))^G&=\pi^* H^{0}(Y, \mathcal{O}_Y \left\lfloor n^{-1} \pi_* D\right\rfloor), \\	\label{G2}
	H^{0}(X, \Omega_X(D))^G&=\pi^* 	H^{0}(Y,\Omega_Y \left\lfloor n^{-1} \pi_*(D+R_{\pi}) \right\rfloor).
 \end{align}

For a $1$-dimensional character $\chi$, let $f_\chi \in K(X)^*$ be such that $\sigma f_\chi=\chi(\sigma) f_\chi$  for all $\sigma \in G$ (whose existence is guaranteed by Hilbert's theorem 90). Then
 \begin{align}\label{G3}
	H^{0}(X, \mathcal{O}_X(D))_\chi&=f_\chi \cdot \pi^* H^{0}(Y, \mathcal{O}_Y \left\lfloor n^{-1}\pi_*\left(D+\left(f_\chi\right)\right)\right\rfloor),\\\label{G4}
	H^{0}(X, \Omega_X(D))_\chi&=f_\chi \cdot \pi^* H^0(Y,\Omega_Y\left\lfloor n^{-1} \pi_*\left(D+\left(f_\chi\right)+R_{\pi}\right)\right\rfloor) .
 \end{align}
\end{prop}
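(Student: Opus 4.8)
\section*{Proof proposal}

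The plan is to reduce all four identities to a single local computation that converts a vanishing condition on $X$ into a vanishing condition on $Y$ through the ramification indices. I will use two structural facts that hold precisely because $K(X)/K(Y)$ is a \emph{tame} Galois extension (this is where $p\nmid n$ is essential): first, $K(X)^G=K(Y)$ and, more generally, every $G$-invariant meromorphic differential on $X$ is the pullback $\pi^*\eta$ of a unique meromorphic differential $\eta$ on $Y$ (which follows from the separability of $K(X)/K(Y)$ together with Galois descent); second, the divisor pullback formulas $\operatorname{div}_X(\pi^*g)=\pi^*\operatorname{div}_Y(g)$ for $g\in K(Y)^*$ and $\operatorname{div}_X(\pi^*\eta)=\pi^*\operatorname{div}_Y(\eta)+R_\pi$ for a differential $\eta$ on $Y$, where the appearance of $R_\pi=\sum_P(e_P-1)P$ is exactly the tame Riemann--Hurwitz relation (tameness forces the local order of $d(\pi^*u)$ at $P$ to be $e_P-1$ and no larger).

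The core lemma is the following floor computation. Let $E=\sum_P c_P P$ be a $G$-invariant divisor on $X$; since its coefficients are constant on fibers, write $c_Q$ for the common value over $Q\in Y$ and $e_Q$ for the common ramification index there. Using $\#\pi^{-1}(Q)=n/e_Q$ (orbit--stabilizer, $k$ algebraically closed), the coefficient of $\pi_*E$ at $Q$ is $(n/e_Q)c_Q$, so the coefficient of $n^{-1}\pi_*E$ at $Q$ equals $c_Q/e_Q$. For any $g\in K(Y)^*$ the inequality $\pi^*\operatorname{div}_Y(g)+E\geq 0$ reads, point by point over $Q$, as $e_Q\,v_Q(g)+c_Q\geq 0$, i.e. $v_Q(g)\geq -c_Q/e_Q$; since $v_Q(g)\in\mathbb{Z}$ this is equivalent to $v_Q(g)+\lfloor c_Q/e_Q\rfloor\geq 0$, that is, $\operatorname{div}_Y(g)+\lfloor n^{-1}\pi_*E\rfloor\geq 0$. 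The identical statement holds with $g$ replaced by a differential $\eta$ on $Y$, reading $v_Q(\eta)$ in place of $v_Q(g)$.

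Granting this, I would prove \eqref{G1} by noting $H^0(X,\mathcal{O}_X(D))^G=\{g\in K(X)^G:\operatorname{div}_X(g)+D\geq 0\}$; since $g\in K(Y)$ and $\operatorname{div}_X(g)=\pi^*\operatorname{div}_Y(g)$, the lemma with $E=D$ gives $g\in H^0(Y,\mathcal{O}_Y\lfloor n^{-1}\pi_*D\rfloor)$, and pulling back yields the claim. For \eqref{G2} I write an invariant differential as $\pi^*\eta$ and use $\operatorname{div}_X(\pi^*\eta)=\pi^*\operatorname{div}_Y(\eta)+R_\pi$, so the condition becomes $\pi^*\operatorname{div}_Y(\eta)+(D+R_\pi)\geq 0$; applying the lemma to the $G$-invariant divisor $E=D+R_\pi$ produces the asserted bound $\lfloor n^{-1}\pi_*(D+R_\pi)\rfloor$.

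The character cases \eqref{G3} and \eqref{G4} reduce to the invariant cases by twisting. Given $s\in H^0(X,\mathcal{O}_X(D))_\chi$, the quotient $h:=s/f_\chi$ is $G$-invariant, hence $h\in K(Y)$, and because $\sigma f_\chi=\chi(\sigma)f_\chi$ scales $f_\chi$ by a constant, the divisor $(f_\chi)$ is itself $G$-invariant; the defining inequality $\operatorname{div}_X(s)+D=\operatorname{div}_X(h)+(D+(f_\chi))\geq 0$ is then the invariant problem \eqref{G1} with $D$ replaced by the $G$-invariant divisor $D+(f_\chi)$, giving exactly $f_\chi\cdot\pi^*H^0(Y,\mathcal{O}_Y\lfloor n^{-1}\pi_*(D+(f_\chi))\rfloor)$. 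Identity \eqref{G4} follows the same way by writing $\omega=f_\chi\cdot\pi^*\eta$ and combining the $f_\chi$-twist with the Riemann--Hurwitz relation, which inserts the extra $R_\pi$ inside the floor. The main obstacle is the input from tameness: one must justify that invariant differentials really are pullbacks from $Y$ and that $\operatorname{ord}_P(\pi^*\eta)=e_P\,v_Q(\eta)+(e_P-1)$; both fail in the wild case, which is exactly why $p\nmid n$ cannot be dropped. The remaining steps, the floor manipulation and the bookkeeping of $\#\pi^{-1}(Q)=n/e_Q$, are routine once tameness is in place.
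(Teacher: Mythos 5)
Your proposal is correct and follows essentially the same route as the paper: descend $G$-invariant functions (and, after twisting by $f_\chi$, the $\chi$-eigenvectors) to $Y$, convert effectivity on $X$ into the floor divisor $\left\lfloor n^{-1}\pi_*(\cdot)\right\rfloor$ on $Y$ using constancy of coefficients and ramification indices along fibers, and insert $R_\pi$ via the tame Riemann--Hurwitz identity. The only cosmetic difference is in \eqref{G2}: the paper reduces to \eqref{G1} by multiplying with a fixed auxiliary pullback differential $\pi^*\varphi$, whereas you invoke descent of invariant differentials directly --- the same fact in different packaging, since your descent claim is proved exactly by writing an invariant differential as a function times $\pi^*\varphi$.
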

\begin{proof}(More details here than in \cite{Kani}.)
Note that $D \geq \pi^*\left\lfloor n^{-1} \pi_* D\right\rfloor$ and hence $H^{0}(X, \mathcal{ O}_X(D))^G \supseteq \pi^* H^{0}(Y, \mathcal{O}_Y \left\lfloor n^{-1} \pi_* D\right\rfloor)$.
 Conversely, if $f \in H^{0}(X, \mathcal{ O}_X(D))^G$, then $f=\pi^* e$ with some $e \in K(Y)$.  Hence $\pi_*((f)+D)=n(e)+\pi_* D \geq 0$, which implies $(e)+\left\lfloor n^{-1} \pi_* D\right\rfloor \geq 0$. This proves (\ref{G1}).
 
To prove (\ref{G2}), fix a meromorphic differential $0 \neq \varphi \in \Omega(Y)$, which always exists by Riemann-Roch. By
$
H^{0}(X, \Omega_X(D))=H^{0}(X, \mathcal{ O}_X \left(D+\left(\pi^* \varphi\right)\right))  \cdot\pi^* \varphi=
H^{0}(X, \mathcal{ O}_X \left(D+\pi^*(\varphi)+R_{\pi}\right)) \cdot \pi^* \varphi,
$
we have
\begin{align}\nonumber
H^{0}(X, \Omega_X(D))^G&=H^{0}(X, \mathcal{ O}_X \left(D+\pi^*(\varphi)+R_{\pi}\right))^G\cdot \pi^*\varphi\\ \nonumber
&=\pi^* H^{0}(Y, \mathcal{O}_Y \left\lfloor n^{-1} \pi_* (\left(D+\pi^*(\varphi)+R_{\pi}\right))\right\rfloor)  \cdot \pi^*\varphi\\ \nonumber
&=\pi^* [H^{0}(Y, \mathcal{O}_Y (\lfloor n^{-1} \pi_* (D+R_{\pi})\rfloor)+(\varphi))  \cdot \varphi]\\ \nonumber
&=\pi^* 	H^{0}(Y,\Omega_Y \left\lfloor n^{-1} \pi_*(D+R_{\pi}) \right\rfloor).
\end{align}
Finally, (\ref{G3}) and (\ref{G4}) for general $\chi$ is followed by 
$$
H^{0}(X, \mathcal{ O}_X(D))_\chi=f_\chi \cdot H^{0}(X, \mathcal{ O}_X\left(D+\left(f_\chi\right)\right))^G,
$$
 $$
 H^{0}(X, \Omega_X(D))_\chi=f_\chi \cdot H^{0} \left( X, \Omega_X (D+(f_\chi) )\right)^G.
 $$
\end{proof}

\subsection{Ramification modules}\cite[Kani]{Kani}
Let $Bl(Y)$ be the \emph{branch locus} of $\pi:X\rightarrow Y$.

Fix a point $P \in X$, and let $G_P$ be the stablizer subgroup of $G$ at $P$, which is a cyclic of order $e_P$. Then there is a unique character
$
\theta_P: G_P \rightarrow k^*
$
such that for any $f\in K(X)^*$,
$$
\frac{\sigma f}{f}\equiv \theta_P(\sigma)^{v_P(f)}(\operatorname{mod} ~\mathfrak{m}_P), \quad \forall \sigma \in G_P,
$$
where $v_P$ denotes the \emph{valuation} at $P$ and $\mathfrak{m}_P$ the maximal ideal of the local ring $\mathcal{ O}_P$. 

Set 
$$
R_{G, P}:=\operatorname{Ind}_{G_P}^G\left(\bigoplus_{d=0}^{e_P -1} d \cdot \theta_P^d\right).
$$
\begin{defn}
For a point $Q\in Y$,
define \emph{the ramification module of $Q$}
$$
R_{G, Q}:=\bigoplus_{P\in \pi^{-1}(Q)}R_{G, P},
$$
and \emph{the ramification module of $\pi$}
 $$R_{G}:=\bigoplus_{Q\in Y}R_{G, Q_i}.$$
Note that this is a finite sum because $R_{G, Q}=0$ for $Q\notin Bl(Y)$.  
\end{defn}

Consider an $f_\chi \in K(X)^*$ such that $\sigma f_\chi=\chi(\sigma) f_\chi$ for any $\sigma \in G$ in Proposition \ref{Kani equality}. Since $f_{\chi}^n \in \pi^* k(Y)$, write $\left(f_{\chi}^n\right)=\pi^*(n A+B)$ where $A, B \in \operatorname{Div}(Y)$ and $\lfloor n^{-1} B\rfloor=0$. 

Note that $\operatorname{Supp}(B)\subseteq Bl(Y)$, so we write $B=\sum_{Q\in Bl} b_Q Q$. By definition, we have
$$
b_Q=n\left\langle\frac{v_{Q}\left(f_{\chi}^n\right)}{n}\right\rangle,
$$
where $\langle r\rangle=r-\lfloor r \rfloor$ denotes the fractional part of $r$. 
The following lemma shows that this $B$ is independent of the choices of $f_{\chi}$.
 
\begin{lemma}\label{bi}
	Let $\chi: G \rightarrow k^*$ be a  $1$-dimensional character. Then for any $Q\in Bl(Y)$, we have
	\begin{equation}\label{fbi}
		n\left\langle\frac{v_{Q}\left(f_{\chi}^n\right)}{n}\right\rangle=\left\langle \chi, R_{G, Q}\right\rangle_G.
	\end{equation}

\end{lemma}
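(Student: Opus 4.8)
The plan is to compute the character multiplicity $\langle\chi,R_{G,Q}\rangle_G$ directly and to verify that it equals $b_Q:=n\langle v_Q(f_\chi^n)/n\rangle$. Fix a point $P\in\pi^{-1}(Q)$, and recall that the fibre $\pi^{-1}(Q)$ is a single $G$-orbit of size $n/e_P$ and that, by definition, $R_{G,Q}=\bigoplus_{P\in\pi^{-1}(Q)}R_{G,P}$. First I would establish the single-point identity
$$\langle\chi,R_{G,P}\rangle_G=v_P(f_\chi)\bmod e_P=:d_0,$$
and then assemble the $n/e_P$ summands.

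For the single-point computation I would apply Frobenius reciprocity to the induced module:
$$\langle\chi,R_{G,P}\rangle_G=\Big\langle\operatorname{Res}_{G_P}\chi,\bigoplus_{d=0}^{e_P-1}d\cdot\theta_P^d\Big\rangle_{G_P}=\sum_{d=0}^{e_P-1}d\,\langle\chi|_{G_P},\theta_P^d\rangle_{G_P}.$$
The decisive input is the defining property of $\theta_P$ applied to $f=f_\chi$: since $\sigma f_\chi/f_\chi=\chi(\sigma)$ is a constant in $k^*$, the congruence modulo $\mathfrak{m}_P$ becomes an equality, giving $\chi|_{G_P}=\theta_P^{\,v_P(f_\chi)}=\theta_P^{\,d_0}$. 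Because the ramification is tame, $\theta_P$ is a primitive character of the cyclic group $G_P$, so $\theta_P^0,\dots,\theta_P^{e_P-1}$ are precisely its distinct irreducible characters and are orthonormal; hence $\langle\theta_P^{\,d_0},\theta_P^d\rangle_{G_P}=\delta_{d,d_0}$ and the sum collapses to $d_0$.

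To match $b_Q$ I would track valuations between $X$ and $Y$. Since $\sigma f_\chi=\chi(\sigma)f_\chi$ only rescales $f_\chi$ by a constant, the divisor $(f_\chi)$ is $G$-invariant, so $v_P(f_\chi)$ and $e_P$ are constant along $\pi^{-1}(Q)$; thus every summand contributes the same $d_0$ and $\langle\chi,R_{G,Q}\rangle_G=(n/e_P)\,d_0$. On the other side, writing $f_\chi^n=\pi^*g$ with $g\in K(Y)$ and using $\pi^*Q=\sum_{P\in\pi^{-1}(Q)}e_PP$, I obtain $n\,v_P(f_\chi)=v_P(f_\chi^n)=e_P\,v_Q(g)$, that is $v_Q(f_\chi^n)=(n/e_P)\,v_P(f_\chi)$. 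Writing $v_P(f_\chi)=qe_P+d_0$ gives $v_Q(f_\chi^n)=nq+(n/e_P)d_0$ with $0\le(n/e_P)d_0<n$, whence $b_Q=n\langle v_Q(f_\chi^n)/n\rangle=(n/e_P)d_0$, which agrees with the character multiplicity.

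I expect the substance of the argument to be bookkeeping rather than depth, and two points deserve care. First, one must establish that $\theta_P$ is primitive, equivalently that a generator of $G_P$ acts on the cotangent space $\mathfrak{m}_P/\mathfrak{m}_P^2$ by a primitive $e_P$-th root of unity; this is exactly where the tameness hypothesis $p\nmid e_P$ enters, and without it the orthogonality step fails. Second, one must keep the two occurrences of the factor $n/e_P$ aligned, namely the fibre size on the representation side and the ramification factor from $\pi^*$ on the valuation side, and verify the inequality $0\le(n/e_P)d_0<n$ so that forming $b_Q$ requires no further reduction modulo $n$.
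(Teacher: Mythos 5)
Your proposal is correct and follows essentially the same route as the paper: Frobenius reciprocity to reduce to $G_P$, the defining congruence for $\theta_P$ applied to $f_\chi$ (which becomes an equality of constants) to identify $\chi|_{G_P}=\theta_P^{v_P(f_\chi)}$, and the valuation comparison $v_Q(f_\chi^n)=(n/e_P)\,v_P(f_\chi)$ to match $b_Q$. If anything, you are slightly more explicit than the paper about summing the identical contributions over the $n/e_P$ points of the fibre and about the bound $0\le (n/e_P)d_0<n$, both of which the paper leaves implicit.
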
 
\begin{proof}
	Let $P \in \pi^{-1}(Q)$. Then by Frobenius reciprocity, we have

	\begin{align}
		\left\langle \chi, R_{G, P}\right\rangle_G  =\left\langle \chi |_{G_P}, \bigoplus_{d=0}^{e_P -1} d \cdot \theta_P^d\right\rangle_{G_P}.
	\end{align}

Note that $\theta_P^d$ runs through are all the irreducible representations of  $G_P$, hence we have
	\begin{equation}
	\left\langle \chi, R_{G, P}\right\rangle_G=a \Leftrightarrow \chi |_{G_P}=\theta_P^a  
	\end{equation}
with $0 \leq a<e_P$.
Choose a generator $\sigma$ of $G_P$, then by definition of $f_\chi$, we have
	$$
	\sigma f_\chi=\chi(\sigma) f_\chi=\theta_P(\sigma)^a f_{\chi}.
	$$
	Furthermore, by the definition of $\theta_P$, we have
	
$$
\theta_P(\sigma)^a=\frac{\sigma f_{\chi}}{f_{\chi}}\equiv \theta_P(\sigma)^{v_P(f_{\chi})}(\operatorname{mod} ~\mathfrak{m}_P),
$$
    which implies $a\equiv v_P(f_{\chi})(\operatorname{mod} ~e_P)$ since $\theta_P(\sigma)$ has order $e_P$ in $k^*$.
     Finally,
    $$
    \frac{\left\langle \chi, R_{G, P}\right\rangle_G}{e_P}=\left\langle \frac{ v_P(f_{\chi})}{e_P}\right\rangle =\left\langle \frac{ v_Q(f_{\chi}^n)}{n}\right\rangle
    =\frac{b_Q}{n},
    $$
    namely we have $\left\langle \chi, R_{G, Q}\right\rangle_G=b_Q$~.

\end{proof}

\section{Irreducible nodal curves}
Let $X$ be an irreducible nodal curve in this section.
\subsection{The $G$-invariant differentials}\label{subsection G}
Let $G$ be a finite group acting on an irreuducible nodal curve $X$ and $Y=X/G$ the quotient curve.
For the space $H^{0}(X, \omega_{X})^G$ of $G$-invariant differentials, it is a classical fact that
\begin{prop}
	If $X$ is smooth, then $$\mathrm{dim}_k H^{0}(X, \Omega_X)^G=g_Y.$$
\end{prop}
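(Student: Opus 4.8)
The plan is to deduce the statement directly from equation (\ref{G2}) of Proposition \ref{Kani equality} by specializing to the zero divisor $D=0$. With this choice (\ref{G2}) reads
$$
H^{0}(X,\Omega_X)^G=\pi^{*}H^{0}(Y,\Omega_Y\left\lfloor n^{-1}\pi_*R_{\pi}\right\rfloor),
$$
so the whole argument reduces to showing that the correction divisor $\left\lfloor n^{-1}\pi_*R_{\pi}\right\rfloor$ vanishes on $Y$.

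To establish this, I would compute the pushforward $\pi_*R_{\pi}$ coefficient by coefficient. Since $\pi$ is Galois, $G$ acts transitively on each fiber, so the fiber over a point $Q\in Y$ consists of $n/e_Q$ points, all sharing the common ramification index $e_Q$. Hence in $R_{\pi}=\sum_{P}(e_P-1)P$ the contribution to $Q$ of $\pi_*R_{\pi}$ is $(n/e_Q)(e_Q-1)=n(1-1/e_Q)$, and after dividing by $n$ the coefficient of $Q$ becomes $1-1/e_Q$. Because $e_Q\ge 1$, this number lies in the half-open interval $[0,1)$, so its floor is $0$; as this holds at every point, we conclude $\left\lfloor n^{-1}\pi_*R_{\pi}\right\rfloor=0$.

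With the correction divisor gone, the displayed identity collapses to $H^{0}(X,\Omega_X)^G=\pi^{*}H^{0}(Y,\Omega_Y)$. Since pullback of meromorphic differentials along the nonconstant map $\pi$ is injective, $\pi^{*}$ restricts to an isomorphism from $H^{0}(Y,\Omega_Y)$ onto $H^{0}(X,\Omega_X)^G$. Finally $\dim_k H^{0}(Y,\Omega_Y)=g_Y$ by the very definition of the genus of the smooth projective curve $Y$, which yields the claim.

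The only genuinely delicate point is the sharp estimate $0\le 1-1/e_Q<1$: it is the strict upper bound that forces each floor to be zero, reflecting the fact that ramification never contributes a full integer to the quotient and therefore produces no additional sections of $\Omega_Y$. Everything else is bookkeeping, resting on the transitivity of $G$ on fibers, which is exactly what makes the coefficient $e_P$ constant along a fiber and the pushforward computation clean.
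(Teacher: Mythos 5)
Your proposal is correct and follows essentially the same route as the paper: specialize (\ref{G2}) of Proposition \ref{Kani equality} to $D=0$ and observe that, by transitivity of $G$ on fibers, the coefficient of $Q$ in $n^{-1}\pi_*R_{\pi}$ is $(e_Q-1)/e_Q\in[0,1)$, so the floor divisor vanishes and $H^{0}(X,\Omega_X)^G=\pi^*H^{0}(Y,\Omega_Y)$ has dimension $g_Y$. The paper's proof is simply a terser version of your computation.
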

\begin{proof}
	With the notations of \S\ref{notations}, let $e_Q:=e_{P}$ for any $P\in \pi^{-1}(Q)$.
	Note that 
	 $$\lfloor n^{-1} \pi_*R_{\pi} \rfloor=
	\sum_{Q\in Y} \left\lfloor \frac{e_Q-1}{e_Q}\right\rfloor Q=0.
	$$
	 By Proposition \ref{Kani equality} (\ref{G2}), we have 
	$$	H^{0}(X, \Omega_X)^G=\pi^* 	H^{0}(Y,\Omega_Y \left\lfloor n^{-1} \pi_*R_{\pi} \right\rfloor)=\pi^* H^{0}(Y,\Omega_Y).$$
\end{proof}

Here comes a natural qustion that for the covering $\pi:X\rightarrow X/G=Y$ of nodal curves, do we still have the equality
\begin{align}\label{G-differentials}
\mathrm{dim}_k H^{0}(X, \omega_X)^G=p_a(Y)?
\end{align}

Consider the normalizations $\hat{X} \rightarrow X$ and $\hat{Y}\rightarrow Y$, respectively. We have $\hat{X}/G=\hat{Y}$, so
there is a commutative diagram
\begin{equation}\nonumber
	\begin{tikzcd}
		\hat{X} \arrow[r, "\hat{\pi}"]\arrow[d] & \hat{Y} \arrow[d]\\
		X\arrow[r, "\pi"] & Y. 
	\end{tikzcd}
\end{equation}

This induces the corresponding morphisms of differentials
\begin{equation}\label{Canonical G-differentials}
	\begin{tikzcd}
		H^{0}(X, \omega_{X}) \arrow [r,dashleftarrow, "?"] \arrow[d,hook] & \pi^*H^{0}(Y, \omega_{Y}) \arrow[d,hook]\\
		H^{0}(\hat{X}, \Omega_{\hat{X}}(\hat{S}_X))\arrow [r,hookleftarrow] & \hat{\pi}^* H^{0}(\hat{Y}, \Omega_{\hat{Y}}(\hat{S}_Y)),
	\end{tikzcd}
\end{equation}
because $X\rightarrow X/G$ takes smooth points to smooth points, so $\hat{\pi}^{-1}(\hat{S}_Y)\subseteq \hat{S}_X $.

\begin{lemma}\label{upper row}
	The upper row 
	$$
    \pi^*H^{0}(Y, \omega_{Y}) \subseteq H^{0}(X, \omega_{X})
	$$
	of (\ref{Canonical G-differentials}) exists if and only if the ramification indexes $e_{P_1}=e_{P_2}$ for all pairs $\{P_1,P_2\}\subseteq \hat{S}_X$.
\end{lemma}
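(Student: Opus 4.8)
The plan is to decide membership in $H^0(X,\omega_X)$ one pair at a time, using the residue criterion recalled in \S\ref{notations}: an element of $H^0(\hat X,\Omega_{\hat X}(\hat S_X))$ lies in $H^0(X,\omega_X)$ exactly when its residues at the two points of every pair sum to zero. Thus, given $\omega\in H^0(Y,\omega_Y)$ viewed as $\hat\omega\in H^0(\hat Y,\Omega_{\hat Y}(\hat S_Y))$, I must compute the residues of $\hat\pi^*\hat\omega$ along each pair $\{P_1,P_2\}\subseteq\hat S_X$. The one local input is the behaviour of residues under the tame cover $\hat\pi\colon\hat X\to\hat Y$: writing $Q_i=\hat\pi(P_i)$ and choosing local parameters with $t=s^{e_{P_i}}$ (possible since $p\nmid e_{P_i}$), one gets
$$\operatorname{Res}_{P_i}(\hat\pi^*\hat\omega)=e_{P_i}\cdot\operatorname{Res}_{Q_i}(\hat\omega).$$
This is the only computation in the argument, and it is the sole place tameness enters.

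Next I would sort the pairs by the type of the image. For a node $P$ of $X$ with pair $\{P_1,P_2\}$ there are two cases. If some $g\in G$ interchanges $P_1$ and $P_2$, then $Q_1=\hat\pi(P_1)=\hat\pi(P_2)=Q_2$, the point $\pi(P)$ is a smooth point of $Y$, so $Q_1\notin\hat S_Y$ and $\operatorname{Res}_{Q_i}(\hat\omega)=0$; here $P_1,P_2$ lie in a single $G$-orbit, whence $e_{P_1}=e_{P_2}$ automatically and there is nothing to check. Otherwise $Q_1\neq Q_2$, and because $\hat Y=\hat X/G$ is the normalization of $Y$ the points $\{Q_1,Q_2\}$ form exactly the pair over the node $\pi(P)$; the residue criterion for $\hat\omega$ on $Y$ then gives $\operatorname{Res}_{Q_1}(\hat\omega)+\operatorname{Res}_{Q_2}(\hat\omega)=0$.

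Combining these in the second case,
$$\operatorname{Res}_{P_1}(\hat\pi^*\hat\omega)+\operatorname{Res}_{P_2}(\hat\pi^*\hat\omega)=e_{P_1}\operatorname{Res}_{Q_1}(\hat\omega)+e_{P_2}\operatorname{Res}_{Q_2}(\hat\omega)=(e_{P_1}-e_{P_2})\operatorname{Res}_{Q_1}(\hat\omega).$$
For $(\Leftarrow)$ this shows that if $e_{P_1}=e_{P_2}$ on every pair then $\hat\pi^*\hat\omega$ meets the residue criterion for all $\omega$, so the upper row of (\ref{Canonical G-differentials}) is defined. For $(\Rightarrow)$ I argue by contraposition: given a pair with $e_{P_1}\neq e_{P_2}$ (necessarily of the second type), I exhibit $\omega$ with $\operatorname{Res}_{Q_1}(\hat\omega)\neq0$, so that the sum above is nonzero and $\pi^*\omega\notin H^0(X,\omega_X)$. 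For this I take a differential of the third kind on the irreducible smooth curve $\hat Y$ with simple poles only at $Q_1,Q_2$ and residues $+1,-1$; its existence is the classical fact that $h^0(\hat Y,\Omega_{\hat Y}(Q_1+Q_2))=g_{\hat Y}+1$, and since its polar locus meets only the single pair $\{Q_1,Q_2\}\subseteq\hat S_Y$ it satisfies the residue criterion, i.e. it lies in $H^0(Y,\omega_Y)$.

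The routine parts are the residue formula and the bookkeeping of which nodes of $X$ map to nodes of $Y$; the step deserving the most care is the test differential for $(\Rightarrow)$, namely checking that the third-kind form with the prescribed poles can be arranged to lie in $H^0(Y,\omega_Y)$ and not merely in $H^0(\hat Y,\Omega_{\hat Y}(\hat S_Y))$. I also expect a remark about $\operatorname{char}(k)=p$ to be needed: the displayed identity only forces $e_{P_1}\equiv e_{P_2}$ in $k$, so to extract the integer equality $e_{P_1}=e_{P_2}$ one keeps in mind that $P_1,P_2$ are the two branches of one node and uses $e_{P_i}=\#G_{P_i}$ together with $G_{P_1}=G_{P_2}$.
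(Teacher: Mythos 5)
Your proof is correct and takes the same essential route as the paper's: membership in $H^{0}(X,\omega_{X})$ is tested pair by pair via the residue criterion of \S\ref{notations}, and the only computation is $\operatorname{Res}_{P}(\hat{\pi}^{*}\hat{\omega})=e_{P}\cdot\operatorname{Res}_{\hat{\pi}(P)}(\hat{\omega})$. In fact you are more careful than the paper, whose proof consists of exactly this computation and nothing else: it does not separate the pairs interchanged by $G$ (which map to smooth points of $Y$) from those mapping to nodes of $Y$; it states the residue relation on $Y$ as $\operatorname{Res}_{\hat{\pi}(P_1)}\varphi_Y=\operatorname{Res}_{\hat{\pi}(P_2)}\varphi_Y$ instead of the opposite sign demanded by its own criterion; and its concluding ``if and only if'' is only valid when $\operatorname{Res}_{\hat{\pi}(P_1)}\varphi_Y\neq 0$, i.e.\ it silently needs exactly the third-kind differential you construct for the ``only if'' direction. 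As a completion of the intended argument, your write-up fills real gaps.

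However, your closing characteristic-$p$ remark deserves far more weight than you give it, because it trivializes the lemma. The claim $G_{P_1}=G_{P_2}$ is correct and needs no hypothesis at all: the normalization $\alpha$ is $G$-equivariant, so any $g$ with $g(P_1)=P_1$ fixes the node $\alpha(P_1)$, hence permutes the two-point fibre $\{P_1,P_2\}$ and must therefore fix $P_2$ as well. Since $\hat{\pi}\colon\hat{X}\to\hat{X}/G=\hat{Y}$ is a quotient map of smooth curves over an algebraically closed field, $e_{P_i}=\#G_{P_i}$, and so $e_{P_1}=e_{P_2}$ holds for \emph{every} pair $\{P_1,P_2\}\subseteq\hat{S}_X$, unconditionally. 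Consequently both sides of the equivalence in Lemma \ref{upper row} are always true: the inclusion $\pi^{*}H^{0}(Y,\omega_{Y})\subseteq H^{0}(X,\omega_{X})$ always exists, your contrapositive branch (and the paper's ``only if'') argues from a hypothesis that can never occur, and the hypothesis ``$e_{P_1}=e_{P_2}$ for all pairs'' in Proposition \ref{G-invariant} is automatic. You should state this consequence explicitly rather than bury it as a patch for the congruence-versus-equality issue; as written, the last sentence of your proof quietly renders the two paragraphs before it unnecessary, and neither you nor the paper draws the conclusion.
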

\begin{proof}
    Given some $\varphi_Y\in H^{0}(Y, \omega_{Y})$, we have $\operatorname{Res}_{\hat{\pi}(P_1)}\varphi_Y=\operatorname{Res}_{\hat{\pi}(P_2)}\varphi_Y$.
    Note that for any $P\in \hat{X}$, we have $\operatorname{Res}_{P}(\hat{\pi}^*\varphi_Y)=e_P\cdot \operatorname{Res}_{\hat{\pi}(P)}\varphi_Y$ .
    Hence for any pair $\{P_1,P_2\}\subseteq \hat{S}_X$, we have
    \begin{equation}
      \operatorname{Res}_{P_1}(\hat{\pi}^*\varphi_Y)=\operatorname{Res}_{P_2}(\hat{\pi}^*\varphi_Y)
    \end{equation} 
    if and only if $e_{P_1}=e_{P_2}$.
\end{proof}

Note that the points of $\hat{S}_X-\hat{\pi}^{-1}(\hat{S}_Y)$ are mapped to the smooth part of $Y$. 
\begin{lemma}\label{trivial}
	For the left column of (\ref{Canonical G-differentials}), we have
	$$H^{0}(X, \omega_{X})^G \hookrightarrow H^{0}(\hat{X}, \Omega_{\hat{X}}( \pi^{-1}(\hat{S}_Y)))^G.$$
\end{lemma}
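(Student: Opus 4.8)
The plan is to track, for a $G$-invariant holomorphic differential $\varphi$ on $X$, its pole behaviour on the normalization $\hat X$ under the embedding $H^0(X,\omega_X)\hookrightarrow H^0(\hat X,\Omega_{\hat X}(\hat S_X))$ recalled in \S\ref{notations}. Writing $\varphi_0$ for the image of $\varphi$, we know a priori that $\varphi_0$ has at worst simple poles along $\hat S_X$ and satisfies the residue relation $\operatorname{Res}_{P_1}\varphi_0+\operatorname{Res}_{P_2}\varphi_0=0$ on every pair $\{P_1,P_2\}$. Since $\hat\pi^{-1}(\hat S_Y)\subseteq \hat S_X$, to land in $H^0(\hat X,\Omega_{\hat X}(\hat\pi^{-1}(\hat S_Y)))$ it suffices to show that $\varphi_0$ is actually regular at every point $P\in \hat S_X-\hat\pi^{-1}(\hat S_Y)$. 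The embedding is $G$-equivariant because the normalization $\alpha$ is $G$-equivariant, so the image of a $G$-invariant differential is again $G$-invariant, and this will yield the claimed injection into the $G$-invariants once regularity is established.

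So fix $P_1\in \hat S_X-\hat\pi^{-1}(\hat S_Y)$, let $N=\alpha(P_1)$ be the corresponding node, and let $\{P_1,P_2\}=\alpha^{-1}(N)$ be its pair. By the remark preceding the lemma, $N$ maps to a smooth point $\bar N$ of $Y$. First I would observe, using the commutative square, that both $\hat\pi(P_1)$ and $\hat\pi(P_2)$ lie in the fibre over $\bar N$ of the normalization $\hat Y\to Y$; since $\bar N$ is smooth this fibre is a single point $Q$, whence $\hat\pi(P_1)=\hat\pi(P_2)=Q$. Consequently $P_1$ and $P_2$ lie in one fibre of the quotient map $\hat\pi\colon \hat X\to \hat Y=\hat X/G$, hence in a single $G$-orbit, and there is $g\in G$ with $g(P_1)=P_2$. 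Since $\alpha$ is $G$-equivariant, $\alpha(g(P_1))=g\alpha(P_1)=g N$ equals $\alpha(P_2)=N$, so $g$ fixes $N$ and therefore permutes its pair, realizing the transposition $P_1\leftrightarrow P_2$.

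The residue computation then closes the argument. From $g^*\varphi_0=\varphi_0$ and the transformation rule $\operatorname{Res}_{P}(g^*\varphi_0)=\operatorname{Res}_{g(P)}\varphi_0$ I obtain $\operatorname{Res}_{P_1}\varphi_0=\operatorname{Res}_{P_2}\varphi_0$; combined with the node relation $\operatorname{Res}_{P_1}\varphi_0+\operatorname{Res}_{P_2}\varphi_0=0$ this gives $2\operatorname{Res}_{P_1}\varphi_0=0$. Here I would note that the existence of a branch-swapping $g\in G_N$ makes the homomorphism $G_N\to \operatorname{Sym}(\{P_1,P_2\})$ surjective, so $2\mid \#G_N\mid n$; as $p\nmid n$ this forces $p\neq 2$, and hence $\operatorname{Res}_{P_1}\varphi_0=0$. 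A differential with at most a simple pole and vanishing residue is regular, so $\varphi_0$ is regular at $P_1$, and symmetrically at $P_2$, which is exactly what was needed.

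The main obstacle is conceptual rather than computational: one must see why the defining residue condition of $H^0(X,\omega_X)$, which is invisible to the Kani-type floor estimates of Proposition \ref{Kani equality}, conspires with $G$-invariance to kill the pole. The key geometric input is that a node can map to a smooth point of the quotient only when some group element interchanges its two branches, and this single fact does double duty: it both forces $\hat\pi(P_1)=\hat\pi(P_2)$ and guarantees that $2$ is invertible in $k$. I would also dispose of the degenerate case $\hat S_X=\hat\pi^{-1}(\hat S_Y)$ at the outset, where the set $\hat S_X-\hat\pi^{-1}(\hat S_Y)$ is empty and the statement holds vacuously.
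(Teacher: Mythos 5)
Your proof is correct and follows essentially the same route as the paper: produce a group element swapping the two branches of a node that lies over a smooth point of $Y$, then combine $G$-invariance with the node residue relation $\operatorname{Res}_{P_1}\varphi_0+\operatorname{Res}_{P_2}\varphi_0=0$ to force both residues to vanish. In fact you supply two details the paper leaves implicit: the justification, via the normalization diagram, that a branch-swapping element of $G$ exists, and the observation that $2\mid n$ together with $p\nmid n$ rules out characteristic $2$, so that $2\operatorname{Res}_{P_1}\varphi_0=0$ really implies $\operatorname{Res}_{P_1}\varphi_0=0$.
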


\begin{proof}
	Let $\varphi\in H^{0}(X, \omega_{X})^G$, and a pair $\{P_1,P_2\}\subseteq \hat{S}_X-\hat{\pi}^{-1}(\hat{S}_Y)$. Then there exists some $\sigma\in G$ such that $\sigma(P_1)=P_2$, which implies that
	$\operatorname{Res}_{P_1}\varphi=\operatorname{Res}_{P_1}\sigma \varphi=\operatorname{Res}_{\sigma(P_1)} \varphi=\operatorname{Res}_{P_2}\varphi.$
	As $\operatorname{Res}_{P_1}\varphi=-\operatorname{Res}_{P_2}\varphi$, we get $\operatorname{Res}_{P_1}\varphi=\operatorname{Res}_{P_2}\varphi=0$, and $\varphi$ is holomorphic at $\{P_1,P_2\}$.
\end{proof}

If for all pairs $\{P_1,P_2\}\subseteq \hat{S}_X$, we have $e_{P_1}=e_{P_2}$, then we can give a positive answer to (\ref{G-differentials}).

\begin{prop}\label{G-invariant}
With the notations above, we have $$\hat{\pi}^* H^{0}(\hat{Y}, \Omega_{\hat{Y}}(\hat{S}_Y))=H^{0}(\hat{X}, \Omega_{\hat{X}}(\pi^{-1}(\hat{S}_Y)))^G.$$
Moreover, if the ramification indexes for any pair $\{P_1,P_2\}\subseteq \hat{S}_X$ are equal, namely $e_{P_1}=e_{P_2}$, then we have the canonical commutative diagram
\begin{equation}\nonumber
	\begin{tikzcd}
		H^{0}(X, \omega_{X})^G \arrow [r,leftarrow,"\pi^*"]\arrow[d,hook] & H^{0}(Y, \omega_{Y}) \arrow[d,hook]\\
		H^{0}(\hat{X}, \Omega_{\hat{X}}(\pi^{-1}(\hat{S}_Y)))^G\arrow [r,leftarrow,"\hat{\pi}^*"] & H^{0}(\hat{Y}, \Omega_{\hat{Y}}(\hat{S}_Y)),
	\end{tikzcd}
\end{equation}
and the rows are both isomorphisms. In particular, we have $\mathrm{dim}_k H^{0}(X, \omega_X)^G=p_a(Y)$.
\end{prop}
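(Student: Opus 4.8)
The plan is to reduce everything to Kani's equality (\ref{G2}) on the normalizations and then match residues. For the first equality, I would apply Proposition \ref{Kani equality}(\ref{G2}) to the smooth cover $\hat\pi\colon\hat{X}\to\hat{Y}=\hat{X}/G$ with the $G$-invariant reduced divisor $D=\pi^{-1}(\hat{S}_Y)$. For $Q\in\hat{S}_Y$ the $n/e_Q$ points $P\in\hat\pi^{-1}(Q)$ all have the same ramification index $e_Q$, so $D+R_{\hat\pi}$ has coefficient $1+(e_Q-1)=e_Q$ at each such $P$, whence $\hat\pi_*(D+R_{\hat\pi})$ has coefficient $(n/e_Q)e_Q=n$ at $Q$ and $\lfloor n^{-1}\hat\pi_*(D+R_{\hat\pi})\rfloor=\hat{S}_Y$. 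Thus (\ref{G2}) gives $H^0(\hat{X},\Omega_{\hat{X}}(\pi^{-1}(\hat{S}_Y)))^G=\hat\pi^*H^0(\hat{Y},\Omega_{\hat{Y}}(\hat{S}_Y))$, which is the displayed assertion. Since pullback of differentials along the dominant map $\hat\pi$ is injective, the bottom row of the square is an isomorphism.

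Next I would assemble the square. The right column is the canonical inclusion for the nodal curve $Y$, the left column is Lemma \ref{trivial}, and under the hypothesis $e_{P_1}=e_{P_2}$ the top arrow $\pi^*$ exists by Lemma \ref{upper row}; commutativity is the functoriality of pullback along the normalization square. To finish it suffices to show the top arrow is an isomorphism. It is injective because $\pi^*$ is. For surjectivity, I would take $\varphi\in H^0(X,\omega_X)^G$, view it inside $H^0(\hat{X},\Omega_{\hat{X}}(\pi^{-1}(\hat{S}_Y)))^G$ via Lemma \ref{trivial}, and write $\varphi=\hat\pi^*\psi$ for a unique $\psi\in H^0(\hat{Y},\Omega_{\hat{Y}}(\hat{S}_Y))$ using the bottom isomorphism. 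I then have to check $\psi\in H^0(Y,\omega_Y)$, i.e. $\operatorname{Res}_{Q_1}\psi+\operatorname{Res}_{Q_2}\psi=0$ for each pair $\{Q_1,Q_2\}\subseteq\hat{S}_Y$ lying over a node of $Y$.

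The geometric input here is that a node of $X$ over a node of $Y$ cannot have its two branches interchanged by $G$: an involution of a node swapping the branches has smooth quotient, so a folded node would map to a smooth point of $Y$. Hence if $\{P_1,P_2\}\subseteq\hat{S}_X$ is an $X$-node with $\hat\pi(P_1)=Q_1$, then necessarily $\hat\pi(P_2)=Q_2$. The residue rule $\operatorname{Res}_P(\hat\pi^*\psi)=e_P\operatorname{Res}_{\hat\pi(P)}\psi$ then gives $\operatorname{Res}_{P_1}\varphi+\operatorname{Res}_{P_2}\varphi=e_{P_1}\operatorname{Res}_{Q_1}\psi+e_{P_2}\operatorname{Res}_{Q_2}\psi$; the left side vanishes because $\varphi\in H^0(X,\omega_X)$, and $e_{P_1}=e_{P_2}$ forces $\operatorname{Res}_{Q_1}\psi+\operatorname{Res}_{Q_2}\psi=0$. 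Since $\hat\pi^{-1}(Q_1)$ is a single $G$-orbit, all $X$-nodes over a fixed $Y$-node are $G$-conjugate and give the same condition by $G$-invariance of $\varphi$, so these are exactly the conditions defining $H^0(Y,\omega_Y)$. Therefore $\psi\in H^0(Y,\omega_Y)$, the top row is an isomorphism, and $\dim_k H^0(X,\omega_X)^G=\dim_k H^0(Y,\omega_Y)=p_a(Y)$.

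I expect the main obstacle to be precisely this residue-matching, and within it the claim that branches of $X$-nodes lying over $\hat{S}_Y$ are never swapped by their stabilizer; this is what guarantees that the two $X$-residues pair against the two distinct $Y$-residues rather than collapsing onto one branch. The equal-ramification hypothesis $e_{P_1}=e_{P_2}$ is then exactly what converts the vanishing of the $X$-residue sum into the vanishing of the $Y$-residue sum.
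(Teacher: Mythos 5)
Your proposal is correct and follows essentially the same route as the paper: the first equality via Proposition \ref{Kani equality} (\ref{G2}) applied to $\hat{\pi}\colon\hat{X}\to\hat{Y}$ with the computation $\left\lfloor n^{-1}\hat{\pi}_*\bigl(\pi^{-1}(\hat{S}_Y)+R_{\hat{\pi}}\bigr)\right\rfloor=\hat{S}_Y$, and then the top row by identifying both $H^{0}(X,\omega_{X})^G$ and $H^{0}(Y,\omega_{Y})$ as the subspaces cut out by the residue relations. Your explicit residue-matching step --- that a node of $X$ lying over a node of $Y$ cannot have its branches swapped by the stabilizer, so its two branches map onto the two distinct branches of the $Y$-node, whence $e_{P_1}=e_{P_2}$ transfers the vanishing of the residue sum downstairs --- is precisely the content the paper compresses into its final one-line remark, so this is the same proof with the implicit details supplied.
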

\begin{proof}
	Now we treat $\hat{S}_Y$ as a positive divisor here.
	By Proposition \ref{Kani equality} (\ref{G2}), we have 
	$$
	H^{0}(\hat{X}, \Omega_{\hat{X}}(\pi^{-1}(\hat{S}_Y)))^G=\pi^* H^0 (\hat{Y},\Omega_{\hat{Y}}  (\left\lfloor n^{-1} \pi_*(\pi^{-1}(\hat{S}_Y)+R_{\pi}) \right\rfloor)).
	$$
	
	Consider the coefficient of prime divisors in $\left\lfloor n^{-1} \pi_*(\pi^{-1}(\hat{S}_Y)+R_{\pi}) \right\rfloor=\sum a_Q Q$.\\
	(1) If $Q\in Bl(Y)-\hat{S}_Y$, then $a_Q=\lfloor	\frac{e_Q-1}{e_Q} \rfloor=0$;\\
	(2) If $Q\in \hat{S}_Y -Bl(Y)$, then $a_Q=1$;\\
	(3) If $Q\in \hat{S}_Y \cap Bl(Y)$, then  
	$a_Q=\lfloor	\frac{1}{e_Q}+\frac{e_Q-1}{e_Q} \rfloor=1$.
	
	 So we have $\left\lfloor n^{-1} \pi_*(\pi^{-1}(\hat{S}_Y)+R_{\pi}) \right\rfloor=\hat{S}_Y$, hence the isomorphism on the lower row. Note that
	both $H^{0}(X, \omega_{X})^G $ and $H^{0}(Y, \omega_{Y})$ are the subspaces satisfying the residue relations, then we have the isomorphism of the upper row.
\end{proof}

\subsection{Chevalley-Weil formula for irreducible nodal curves}
Let $\chi$ be a $1$-dimensional character of $G$. With the notations in  (\ref{Canonical G-differentials}), we consider the embedding
$$H^0(X,\omega _X)_{\chi}\rightarrow H^{0}(\hat{X}, \Omega_{\hat{X}}(\hat{S}_X))_{\chi}.$$

\begin{prop}\label{chi-set}
Let $Y$ be smooth, set
\begin{equation}
\hat{S}_X^{\chi}=\left\{\hat{P}\in \hat{S}_X \mid \exists \tau\in G_{\alpha(\hat{P})}\text{ s.t. }\tau(\hat{P}) \neq \hat{P} \text{ and }\chi(\tau)=-1 \right\}.
\end{equation}
Then the image of $H^{0}(X, \omega_{X})_{\chi}$ in $H^{0}(\hat{X}, \Omega_{\hat{X}}(\hat{S}_X))_{\chi}$ is equal to $H^{0}(\hat{X}, \Omega_{\hat{X}}(\hat{S}_X^{\chi}))_{\chi}$. So we have an isomorphism
\begin{align}
	H^{0}(X, \omega_{X})_{\chi} \overset{\sim}{\rightarrow} H^{0}(\hat{X}, \Omega_{\hat{X}}(\hat{S}_X^{\chi}))_{\chi}~.
\end{align}
We call $\hat{S}_X^{\chi}$ \emph{the singular $\chi$-set of $X$}.
\end{prop}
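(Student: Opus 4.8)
The plan is to work entirely inside the ambient $\chi$-eigenspace $W:=H^{0}(\hat{X},\Omega_{\hat{X}}(\hat{S}_X))_{\chi}$ and to identify the two subspaces of $W$ that must be shown equal. On one side, since the embedding $H^{0}(X,\omega_X)\hookrightarrow H^{0}(\hat{X},\Omega_{\hat{X}}(\hat{S}_X))$ is $G$-equivariant and (by the residue criterion recalled in \S\ref{notations}) cuts out exactly the differentials with $\operatorname{Res}_{P_1}\varphi+\operatorname{Res}_{P_2}\varphi=0$ for every pair, and since $k[G]$ is semisimple so that taking $\chi$-eigenspaces is exact, the image of $H^{0}(X,\omega_X)_{\chi}$ is the subspace $V_1\subseteq W$ defined by these residue relations. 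On the other side, a differential in $\Omega_{\hat{X}}(\hat{S}_X)$ has at worst a simple pole at each branch, so being holomorphic at a branch $\hat{P}$ is equivalent to $\operatorname{Res}_{\hat{P}}\varphi=0$; hence $H^{0}(\hat{X},\Omega_{\hat{X}}(\hat{S}_X^{\chi}))_{\chi}$ is the subspace $V_2\subseteq W$ defined by $\operatorname{Res}_{\hat{P}}\varphi=0$ for all $\hat{P}\in\hat{S}_X\setminus\hat{S}_X^{\chi}$. The statement reduces to the identity $V_1=V_2$. First I would record the transformation law that, for any $\sigma\in G$ and $\varphi\in W$, one has $\operatorname{Res}_{\sigma(\hat{P})}\varphi=\chi(\sigma)\operatorname{Res}_{\hat{P}}\varphi$, which follows exactly as in the proof of Lemma \ref{trivial} together with $\sigma\varphi=\chi(\sigma)\varphi$.

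Next I would bring in the hypothesis that $Y$ is smooth. The key geometric input is that smoothness of the quotient forces, at every node $P$ of $X$, the stabilizer $G_P=G_{\alpha(\hat{P})}$ to interchange the two branches $P_1,P_2$: were $G_P$ to fix each branch, its image in $Y$ would again carry two branches and $Y$ would be singular there. Fix a swapping element $\tau_0\in G_P$ and let $G_P^{+}\trianglelefteq G_P$ be the index-two subgroup fixing each branch, so that $G_P^{+}=G_{\hat{P}_1}=G_{\hat{P}_2}$ and $\tau_0^{2}\in G_P^{+}$. Writing $r_i=\operatorname{Res}_{P_i}\varphi$, the transformation law yields $r_2=\chi(\tau_0)\,r_1$, and for $h\in G_P^{+}$ it yields $r_1=\chi(h)\,r_1$.

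With this in hand I would run a node-by-node comparison of the conditions defining $V_1$ and $V_2$ on $W$, splitting according to $\chi|_{G_P^{+}}$. If $\chi|_{G_P^{+}}\neq 1$, choosing $h$ with $\chi(h)\neq 1$ forces $r_1=r_2=0$ for every $\varphi\in W$, so both the residue relation of $V_1$ and the vanishing condition of $V_2$ hold automatically and agree at this node. If $\chi|_{G_P^{+}}=1$, then every swapping element lies in the coset $\tau_0 G_P^{+}$ and shares the common value $\chi(\tau_0)$, which satisfies $\chi(\tau_0)^2=\chi(\tau_0^2)=1$, whence $\chi(\tau_0)=\pm1$; moreover $\hat{P}_1\in\hat{S}_X^{\chi}$ precisely when $\chi(\tau_0)=-1$. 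The residue relation of $V_1$ now reads $(1+\chi(\tau_0))r_1=0$, which imposes $r_1=0$ exactly when $\chi(\tau_0)\neq -1$, i.e.\ exactly when $\hat{P}_1\notin\hat{S}_X^{\chi}$; since $r_1=0\iff r_2=0$ and $\hat{P}_1\in\hat{S}_X^{\chi}\iff\hat{P}_2\in\hat{S}_X^{\chi}$, this is precisely the condition cutting out $V_2$ at this node. Thus the defining conditions of $V_1$ and $V_2$ coincide at every node, and as these are the only conditions, $V_1=V_2$, which is the desired isomorphism.

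I expect the main obstacle to be the geometric step that smoothness of $Y$ forces branch-interchange at each node — making this rigorous needs a local analysis of the tame quotient of a node — together with the bookkeeping caused by the non-uniqueness of the swapper $\tau_0$. The latter is exactly what makes the split into the two sub-cases for $\chi|_{G_P^{+}}$ indispensable, since only when $\chi$ is trivial on $G_P^{+}$ is the sign $\chi(\tau_0)$ well defined and equal to the marker distinguishing membership in $\hat{S}_X^{\chi}$.
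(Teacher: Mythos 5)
Your argument is correct and is essentially the paper's own proof: both rest on the residue transformation law for $\chi$-eigenvectors together with the fact that smoothness of $Y$ forces some element of $G_{\alpha(\hat{P})}$ to interchange the two branches at each node, and both reduce membership on either side to the same nodewise residue conditions. The paper organizes this as a direct double inclusion (for a differential with a pole at a pair, \emph{any} branch-swapping $T$ satisfies $\chi(T)\operatorname{Res}_{P_1}\varphi_0=\operatorname{Res}_{P_2}\varphi_0=-\operatorname{Res}_{P_1}\varphi_0$, hence $\chi(T)=-1$), which makes your case split on $\chi|_{G_P^{+}}$ and the discussion of well-definedness of the sign $\chi(\tau_0)$ harmless but unnecessary.
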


\begin{proof}

Assume $\varphi \in  H^{0}(\hat{X}, \Omega_{\hat{X}}(\hat{S}_X^{\chi}))_{\chi}$, and let $\alpha^{-1}(P)=\{P_1,P_2\}\subseteq \hat{S}_X^{\chi}$. So there is a $T\in G_P$ with $T(P_1)=P_2$ and $\chi(T)=-1$ by the definition of $\hat{S}_X^{\chi}$. Hence
$$
-\operatorname{Res}_{P_1}\varphi=\operatorname{Res}_{P_1}T \varphi=\operatorname{Res}_{T^{-1}(P_1)} \varphi=\operatorname{Res}_{P_2}\varphi.
$$
Conversely, given some $\varphi_{0}\in H^{0}(X, \omega_{X})_{\chi}$ with poles on a pair $\alpha^{-1}(P)=\{P_1,P_2\}\subseteq \hat{S}_X$ and some $T\in G_P$ with $T(P_1)=P_2$, which exists by the smooth of $Y$, we have
$$
\chi(T)\operatorname{Res}_{P_1}( \varphi_0) =\operatorname{Res}_{P_1}(T \varphi_0)=\operatorname{Res}_{P_2}( \varphi_0)=-\operatorname{Res}_{P_1}( \varphi_0).
$$
Hence $\chi(T)=-1$ and $\{P_1,P_2\}\subseteq \hat{S}_X^{\chi}$.
\end{proof}

\begin{rmk}
If $\chi=1$ is the trivial representation, then $\hat{S}_X^{\chi}=\varnothing$, which is consistent with \emph{Lemma \ref{G-invariant}} in the case $\hat{S}_Y=\varnothing$.\\
\end{rmk}
\begin{rmk}
	Suppose $\varphi' \in H^{0}(\hat{X}, \Omega_{\hat{X}}(S'))$ has a pole at $P_1 \in S'$, namely $\operatorname{Res}_{P_1}\varphi' \neq 0$. If $\varphi' \in H^{0}(X, \omega_{X})$, then for the pair $\{P_1,P_2\}$, it requires\\
	a) $v_{P_2}(\varphi')=v_{P_1}(\varphi')=-1$; $~$ b) $\operatorname{Res}_{P_2}\varphi'=-\operatorname{Res}_{P_1}\varphi'$.
	
	For a), in general, we can't determine $v_{P_2}(\varphi')$ from the value of $v_{P_1}(\varphi')$. But if $Y$ is smooth, then
	$v_{P_2}(\varphi')=v_{P_1}(\varphi')=v_{P}(\varphi')$ for $\forall P\in \hat{\pi}^{-1}\hat{\pi}(P_1)$.
	
	For b), under the hypothesis of smoothness of $Y$, we use the criterion from the \emph{singular $\chi$-set} to delete these points that can not be poles.
\end{rmk}

Assume $Y=X/G$ is smooth for the rest of this section.\\

Now we compute the dimension of  $H^{0}(\hat{X}, \Omega_{\hat{X}}(\hat{S}_X^{\chi}))_{\chi}$. Let $f_\chi$ be a rational fuction on $\hat{X}$
 such that $\sigma f_\chi=\chi(\sigma) f_\chi,~ \forall \sigma \in G$ and
set $D_{\chi}=\left\lfloor n^{-1} \hat{\pi}_*\left(\hat{S}_X^{\chi}+(f_\chi)+R_{\hat{\pi}}\right)\right\rfloor$.
By Proposition \ref{Kani equality} (\ref{G4}), we have
$$ H^{0}(\hat{X}, \Omega_{\hat{X}}(\hat{S}_X^{\chi}))_{\chi}=f_\chi \cdot \hat{\pi}^* H^{0}(Y,\Omega_Y(D_{\chi})).$$

By Riemann-Roch Theorem, we have 
\begin{align}
\mathrm{dim}_k H^{0}(Y,\Omega_Y(D_{\chi}))
=\mathrm{dim}_k H^{0}(Y,\mathcal{O}_Y(-D_{\chi}))+\operatorname{deg} D_{\chi}+g_Y-1.
\end{align}

\begin{lemma}\label{vanish}
	The space $H^{0}(Y,\mathcal{O}_Y(-D_{\chi}))$ vanishes except when $\chi=1_G$, and in this case, we have $\mathrm{dim}_k H^{0}(Y,-D_{1_G})=1$.
\end{lemma}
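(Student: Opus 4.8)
The plan is to push the vanishing question down to the smooth quotient $Y$ and settle it by a degree count, reinforced at the boundary by a linear-equivalence argument. Since $X$ is irreducible and $Y=X/G$ is smooth, $Y$ is a smooth connected projective curve; hence for any divisor $E$ on $Y$ we have $H^{0}(Y,\mathcal O_Y(E))=0$ whenever $\deg E<0$, while for $\deg E=0$ the space is nonzero exactly when $E\sim 0$, in which case it is $1$-dimensional. So it suffices to prove that $\deg D_\chi\ge 0$ and to understand the equality case.

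First I would compute $D_\chi=\lfloor n^{-1}\hat\pi_*(\hat S_X^\chi+(f_\chi)+R_{\hat\pi})\rfloor$ coefficientwise. Applying the construction preceding Lemma \ref{bi} to $\hat\pi\colon\hat X\to Y$, write $(f_\chi^{\,n})=\hat\pi^{*}(nA+B)$ with $A,B\in\operatorname{Div}(Y)$, $\lfloor n^{-1}B\rfloor=0$, and $B=\sum_Q b_Q\,Q$. For $P\in\hat\pi^{-1}(Q)$ the value $v_P(f_\chi)$ is constant along the fibre (because $\sigma f_\chi=\chi(\sigma)f_\chi$); call it $m_Q$ and write $m_Q=a_Q+e_Q\ell_Q$ with $0\le a_Q<e_Q$, where $e_Q$ is the common ramification index. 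By the proof of Lemma \ref{bi}, $a_Q=\langle\chi,R_{G,P}\rangle_G$, and summing over the fibre gives $b_Q=\frac{n}{e_Q}a_Q$; moreover the integral part $\sum_Q\ell_Q\,Q$ is exactly $A$. Since $\hat S_X^\chi$ is $G$-stable (conjugation preserves its defining condition because $\chi$ is $1$-dimensional), it is a union of whole fibres and contributes $s_Q/e_Q$ to $n^{-1}\hat\pi_*\hat S_X^\chi$ with $s_Q\in\{0,1\}$. Combining this with $n^{-1}\hat\pi_*R_{\hat\pi}=\sum_Q\frac{e_Q-1}{e_Q}Q$, the floor collapses to $D_\chi=A+C_\chi$, where $C_\chi$ is the reduced effective divisor whose coefficient $c_Q$ at $Q$ is $1$ if $s_Q=1$ or $a_Q\ge 1$, and $0$ otherwise.

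Then the degree bound is immediate: as $nA+B=(f_\chi^{\,n})$ is principal on the projective curve $Y$, one gets $\deg A=-\frac1n\deg B=-\sum_Q a_Q/e_Q\le 0$, so $\deg D_\chi=\sum_Q\bigl(c_Q-a_Q/e_Q\bigr)$. Each summand is $\ge 0$ because $a_Q<e_Q$, and a summand vanishes exactly when $a_Q=0$ and $s_Q=0$. Hence $\deg D_\chi\ge 0$, with equality precisely when $B=0$ (all $a_Q=0$) and $\hat S_X^\chi=\varnothing$. To finish, suppose $\chi\neq 1_G$ and $H^{0}(Y,\mathcal O_Y(-D_\chi))\neq 0$; for $0\neq u$ there, $(u)\ge D_\chi$ forces $0=\deg(u)\ge\deg D_\chi\ge 0$, so $\deg D_\chi=0$ and $(u)=D_\chi$. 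The equality analysis then gives $B=0$ and $\hat S_X^\chi=\varnothing$, whence $D_\chi=A$ and $(f_\chi)=\hat\pi^{*}A=\hat\pi^{*}(u)=(\hat\pi^{*}u)$; therefore $f_\chi=c\,\hat\pi^{*}u$ for some $c\in k^{*}$, which is $G$-invariant and forces $\chi=1_G$, a contradiction. For $\chi=1_G$ we may take $f_\chi=1$, so $D_{1_G}=0$ (compare Lemma \ref{G-invariant}) and $H^{0}(Y,\mathcal O_Y)=k$ is $1$-dimensional.

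The hard part will be exactly the borderline case $\deg D_\chi=0$: the degree inequality alone cannot separate the trivial character from a nontrivial $\chi$ that happens to be trivial on every inertia group and to have empty singular $\chi$-set, and it is the linear-equivalence step — deducing from $A\sim 0$ that $f_\chi$ descends to $Y$ — that rules the latter out. The only other delicate point is the coefficientwise floor computation, where the identity $a_Q=\langle\chi,R_{G,P}\rangle_G$ from Lemma \ref{bi} and the $G$-stability of $\hat S_X^\chi$ are what make the remainder term collapse to the clean correction $C_\chi$.
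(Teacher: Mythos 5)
Your proof is correct and follows essentially the same route as the paper: a degree count showing $\deg D_{\chi}\ge 0$ (strictly positive when $\hat{S}_X^{\chi}\neq\varnothing$), followed by the observation that in the degree-zero case principality of $D_{\chi}$ forces $(f_{\chi})=(\hat{\pi}^{*}u)$, so $f_{\chi}$ is $G$-invariant up to a constant and $\chi=1_G$. The only difference is cosmetic: you compute $D_{\chi}=A+C_{\chi}$ coefficientwise exactly, where the paper gets by with floor inequalities, but the key ingredients (pushing down the principal divisor $(f_{\chi}^{\,n})$, the contribution of the singular $\chi$-set, and the descent argument) are the same.
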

\begin{proof}
	We will show that $\operatorname{deg} D_{\chi} > 0$ if $\hat{S}_X^{\chi}\neq \varnothing$ and $D_{\chi}$ is principal if and only if $\chi=1$.	Assume
	$$
	\hat{\pi}_*(f_\chi)=\sum_{Q\in Y} \frac{n}{e_Q}b_Q\cdot Q 
	$$
 where $b_Q=v_P(f_\chi)$, $\forall P\in \hat{\pi}^{-1}(Q)$. Note that
$$
\left\lfloor n^{-1} \hat{\pi}_*((f_\chi)+R_{\hat{\pi}})\right\rfloor=\sum_{Q} \left\lfloor \frac{b_Q+e_Q-1}{e_Q}\right\rfloor Q\geq \sum_{Q}\frac{b_Q}{e_Q}Q,
$$
hence  we have
\begin{equation}
\operatorname{deg}\left\lfloor n^{-1} \hat{\pi}_*\left(\hat{S}_X^{\chi}+(f_\chi)+R_{\hat{\pi}}\right)\right\rfloor \geq \operatorname{deg}\left\lfloor n^{-1} \hat{\pi}_*((f_\chi)+R_{\pi})\right\rfloor\geq n^{-1}\operatorname{deg}(f_\chi)=0.
\end{equation}\\

Wirte 
$
\hat{\pi}_*(\hat{S}_X^{\chi})=\sum_{Q\in Y} \frac{n}{e_Q}c_Q\cdot Q.
$
If $\hat{S}_X^{\chi}\neq \varnothing$, then there is some $c_{Q'}\geq 1$, hence

\begin{align}
\operatorname{deg} D_{\chi}&=\sum_{Q\neq Q'} \left\lfloor \frac{c_Q+b_Q+e_Q-1}{e_Q}\right\rfloor + \left\lfloor \frac{c_{Q'}+b_{Q'}+e_{Q'}-1}{e_{Q'}}\right\rfloor \\\nonumber
&\geq   \sum_{Q\neq Q'} \left\lfloor \frac{c_Q+b_Q+e_Q-1}{e_Q}\right\rfloor + \left\lfloor \frac{b_{Q'}+e_{Q'}}{e_{Q'}}\right\rfloor>\sum_{Q}\frac{b_Q}{e_Q}=0.
\end{align}
Hence $\mathrm{dim}_k H^{0}(Y,\mathcal{O}_Y(-D_{\chi}))=0$ provided $\hat{S}_X^{\chi}\neq \varnothing$.\\

Now we suppose $\hat{S}_X^{\chi}=\varnothing$, and $\operatorname{deg}\left\lfloor n^{-1} \hat{\pi}_*((f_\chi)+R_{\hat{\pi}})\right\rfloor=0$, then we have 
$$\left\lfloor \frac{ b_Q+e_Q-1}{e_Q}\right\rfloor=\frac{b_Q}{e_Q},$$ which implies $b_Q=\lambda_Q e_Q$ for some integer $\lambda_Q$ and 
$D_{\chi}=n^{-1} \hat{\pi}_*(f_\chi)$. If $D_{\chi}$ is principal, namely $\mathrm{dim}_k H^{0}(Y,\mathcal{O}_Y(-D_{\chi}))=1$, then we have $D_{\chi}=(h)$ for some rational function $h\in K(Y)$. Hence $(f_\chi)=(\hat{\pi}^* h)$, which implies $f_\chi \in K(X)^G$, namely $\chi=1$. Conversely, if $\chi=1$, then $f_\chi=\hat{\pi}^* h$ for some rational function $h\in K(Y)$ and $D_{\chi}=(h)$, hence $\mathrm{dim}_k H^{0}(Y,\mathcal{O}_Y(-D_{\chi}))=1$.
\end{proof}

\begin{defn}\label{m(S)}
	Let $S\subseteq \hat{X}$ be a finite subset stable by $G$, and define 
	\begin{equation}
		m_{\chi}(S)=\#\hat{\pi}(S)+\sum_{Q \notin \hat{\pi}(S)}\left\lfloor\frac{e_Q-1}{e_Q}+\frac{1}{n}\left\langle \chi, R_{G, Q}\right\rangle_G\right\rfloor-\frac{1}{n}\left\langle \chi, R_G\right\rangle_G.
	\end{equation}
\end{defn}

\begin{lemma}\label{deg}
We have	$\operatorname{deg} D_{\chi}=m_{\chi}(\hat{S}_X^{\chi})$, which is independent of the choice of $f_{\chi}$.
\end{lemma}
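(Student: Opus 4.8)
The plan is to compute $\operatorname{deg} D_{\chi}$ coefficient-by-coefficient on $Y$ and match it term-for-term against Definition \ref{m(S)} with $S=\hat{S}_X^{\chi}$. First I would read off the coefficient of each $Q\in Y$ in the three push-forwards appearing in $D_{\chi}$. Writing $\hat{\pi}_*(\hat{S}_X^{\chi})=\sum_Q \frac{n}{e_Q}c_Q\,Q$, $\hat{\pi}_*(f_\chi)=\sum_Q \frac{n}{e_Q}b_Q\,Q$ with $b_Q=v_P(f_\chi)$ for $P\in\hat{\pi}^{-1}(Q)$, and $\hat{\pi}_*(R_{\hat{\pi}})=\sum_Q \frac{n}{e_Q}(e_Q-1)\,Q$, the coefficient of $Q$ in $n^{-1}\hat{\pi}_*(\hat{S}_X^{\chi}+(f_\chi)+R_{\hat{\pi}})$ is $\tfrac{c_Q+b_Q+e_Q-1}{e_Q}$, so that
$$\operatorname{deg} D_{\chi}=\sum_Q \left\lfloor \frac{c_Q+b_Q+e_Q-1}{e_Q}\right\rfloor.$$
The crucial structural observation here is that $\hat{S}_X^{\chi}$ is $G$-stable and that, because $Y$ is smooth, each fibre $\hat{\pi}^{-1}(Q)$ is a single $G$-orbit; hence $c_Q\in\{0,1\}$, with $c_Q=1$ exactly when $Q\in\hat{\pi}(\hat{S}_X^{\chi})$, and $\#\{Q:c_Q=1\}=\#\hat{\pi}(\hat{S}_X^{\chi})$.

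Next I would convert the geometric valuation $b_Q=v_P(f_\chi)$ into the ramification-module quantity $\langle\chi,R_{G,Q}\rangle_G$. Since $f_\chi^n$ is $G$-invariant, I write $(f_\chi^n)=\hat{\pi}^*(nA+B)$ with $\lfloor n^{-1}B\rfloor=0$, exactly as in the Ramification modules subsection; comparing valuations at a point $P$ over $Q$ gives $n\,b_Q=e_Q(n\,a_Q+b_Q^{\,B})$, i.e. $b_Q=e_Q a_Q+\tfrac{e_Q}{n}\langle\chi,R_{G,Q}\rangle_G$, where Lemma \ref{bi} identifies the coefficient of $B$ at $Q$ with $\langle\chi,R_{G,Q}\rangle_G$. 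Substituting and pulling the integer $a_Q$ out of the floor, the $Q$-contribution becomes $a_Q+\left\lfloor \tfrac{c_Q}{e_Q}+\tfrac{e_Q-1}{e_Q}+\tfrac{1}{n}\langle\chi,R_{G,Q}\rangle_G\right\rfloor$. I then split on $c_Q$: if $c_Q=1$, then $\tfrac{c_Q+e_Q-1}{e_Q}=1$ absorbs into the integer part and, since $0\le\tfrac1n\langle\chi,R_{G,Q}\rangle_G<1$, the contribution is $a_Q+1$; if $c_Q=0$, it is $a_Q+\left\lfloor\tfrac{e_Q-1}{e_Q}+\tfrac1n\langle\chi,R_{G,Q}\rangle_G\right\rfloor$.

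Finally I would sum over $Q$. The $c_Q=1$ terms contribute $\#\hat{\pi}(\hat{S}_X^{\chi})$ beyond $\sum_Q a_Q$, the $c_Q=0$ terms reproduce the middle sum of $m_{\chi}$, and the remaining piece is $\sum_Q a_Q=\operatorname{deg} A$. The last ingredient is that $\operatorname{deg}(f_\chi)=0$ forces $n\operatorname{deg} A+\operatorname{deg} B=0$, whence $\sum_Q a_Q=-\tfrac1n\operatorname{deg} B=-\tfrac1n\langle\chi,R_G\rangle_G$, which is precisely the last term of Definition \ref{m(S)}. Assembling the three pieces yields $\operatorname{deg} D_{\chi}=m_{\chi}(\hat{S}_X^{\chi})$. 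Independence of the choice of $f_\chi$ is then automatic, since the right-hand side involves only $\chi$, the ramification data $e_Q$, $R_{G,Q}$, $R_G$, and the set $\hat{S}_X^{\chi}$, the latter depending only on $\chi$ by Proposition \ref{chi-set}.

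The main obstacle I anticipate is the careful bookkeeping around the two different normalizations of ``$b_Q$'': the geometric valuation $v_P(f_\chi)$ that enters $\operatorname{deg} D_{\chi}$ directly, versus the normalized fractional quantity $\langle\chi,R_{G,Q}\rangle_G\in[0,n)$ coming from the ramification module. Getting the factor $e_Q/n$ right in $b_Q=e_Q a_Q+\tfrac{e_Q}{n}\langle\chi,R_{G,Q}\rangle_G$, and verifying that the integer $a_Q$ cleanly exits the floor while the genuinely fractional remainder stays in $[0,1)$, is the delicate step; once this identity and the degree relation $n\operatorname{deg} A+\operatorname{deg} B=0$ are in hand, the remainder is a routine comparison of coefficients.
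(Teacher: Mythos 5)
Your proof is correct and follows essentially the same route as the paper's: both decompose $(f_\chi^n)=\hat{\pi}^*(nA+B)$, invoke Lemma \ref{bi} to identify the coefficients of $B$ with $\langle\chi,R_{G,Q}\rangle_G$, pull the integral parts ($a_Q$ and the contribution of $\hat{\pi}(\hat{S}_X^{\chi})$) out of the floor, and use $\operatorname{deg}(f_\chi)=0$ to convert $\operatorname{deg}A$ into $-\tfrac{1}{n}\langle\chi,R_G\rangle_G$. The only difference is presentational: you track coefficients point-by-point where the paper manipulates the divisors $U^{\chi}$, $V^{\chi}$, $A$, $B$ globally, and your explicit justification that $c_Q\in\{0,1\}$ (via $G$-stability of $\hat{S}_X^{\chi}$ and fibres of $\hat{\pi}$ being single orbits) makes precise a step the paper leaves implicit.
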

\begin{proof}
	Denote $s_{\chi}=\#\hat{\pi}(\hat{S}_X^{\chi})$.
	Write $n^{-1}\hat{\pi}_*(\hat{S}_X^{\chi})=U^{\chi}+V^{\chi}$, where $\operatorname{Supp}(V^{\chi})=Bl(Y)\cap \hat{\pi}(\hat{S}_X^{\chi})$.
    Suppose $\left(f_{\chi}^n\right)=\hat{\pi}^*(n A+B)$ and $\lfloor n^{-1} B\rfloor=0$ as in lemma \ref{bi}. Write $B=\sum_{Q\in Bl(Y)} b_Q Q$ . 
    By
    \begin{align}
    	\left\lfloor n^{-1} \pi_*\left(\hat{S}_X^{\chi}+(f_\chi)+R_{\hat{\pi}}\right)\right\rfloor
    	=U^{\chi}+A+\left\lfloor  V^{\chi}+n^{-1}B+ n^{-1} \hat{\pi}_*R_{\hat{\pi}}\right\rfloor,
    \end{align}
and $\operatorname{deg}B= -\operatorname{deg}nA$, we have
\begin{align}
	\operatorname{deg} D_{\chi}&=\#U^{\chi}+\sum_{Q \in \hat{\pi}(\hat{S}_X^{\chi})}\left\lfloor1+\frac{b_Q}{n}\right\rfloor+\sum_{Q \notin \hat{\pi}(\hat{S}_X^{\chi})}\left\lfloor\frac{e_Q-1}{e_Q}+\frac{b_Q}{n}\right\rfloor
	-\sum_{Q} \frac{b_Q}{n}\\\nonumber
	&=s_{\chi}+\sum_{Q \notin \hat{\pi}(\hat{S}_X^{\chi})}\left\lfloor\frac{e_Q-1}{e_Q}+\frac{b_Q}{n}\right\rfloor
	-\sum_{Q} \frac{b_Q}{n}.
\end{align}
By Lemma \ref{bi} (\ref{fbi}), we have $b_Q=\left\langle \chi, R_{G, Q}\right\rangle_G$, hence
$$
	\operatorname{deg} D_{\chi}=s_{\chi}+\sum_{Q \notin \hat{\pi}(\hat{S}_X^{\chi})}\left\lfloor\frac{e_Q-1}{e_Q}+\frac{1}{n}\left\langle \chi, R_{G, Q}\right\rangle_G\right\rfloor-\frac{1}{n}\left\langle \chi, R_G\right\rangle_G=m_{\chi}(\hat{S}_X^{\chi}).
$$

\end{proof}

Now we summary above discussion.

For a quotient map $\pi: X \rightarrow X/G=Y$ from an irreducible nodal curve to a smooth curve, we have the induced covering of curves $\hat{\pi}:\hat{X} \rightarrow Y$ with $\hat{S}_X\subseteq \hat{X}$ the preimage of singular locus.
Let $Bl(Y)$ be the branch locus, $R_G$ the ramification module of $\hat{\pi}$, and $R_{G, Q}$ the ramification module of $Q\in Y$.

\begin{thm}[The Chevalley-Weil formula on irreducible nodal curves]\label{irr CW}
	Let $f: X \rightarrow X/G$ be the quotient map of irreducible nodal curves by a finite group $G$ of order $n$.
	Assume $Y=X/G$ is smooth and $\operatorname{char}(k) \nmid n$, then the multiplicity of a given irreducible character $\chi$ is given by 
	\begin{align}\label{irr-cw}
\operatorname{dim}_k  H^{0}(X, \omega_{X})_{\chi}=g_Y-1+ m_{\chi}(\hat{S}_X^{\chi})+\langle \chi, 1_G\rangle.
	\end{align}
	where $\hat{S}_X^{\chi}$ is the \emph{singular $\chi$-set} of $X$ defined in \emph{Proposition \ref{chi-set}}, and 
$$m_{\chi}(\hat{S}_X^{\chi})=	s_{\chi}+\sum_{Q \notin \hat{\pi}(\hat{S}_X^{\chi})}\left\lfloor\frac{e_Q-1}{e_Q}+\frac{1}{n}\left\langle \chi, R_{G, Q}\right\rangle_G\right\rfloor-\frac{1}{n}\left\langle \chi, R_G\right\rangle_G$$
is defined in \emph{Definition \ref{m(S)}}.

In particular, when $\chi=1_G$, we have $\hat{S}_X^{\chi}=\varnothing$ and 
$
\operatorname{dim}_k  H^{0}(X, \omega_{X})^G=g_Y,
$
which is a special case of \emph{Proposition \ref{G-invariant}}.
\end{thm}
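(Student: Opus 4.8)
The plan is to chain together the preceding reductions into a single Riemann--Roch computation on the smooth quotient $Y$, since the three genuinely substantive steps have already been isolated as lemmas. First I would apply Proposition~\ref{chi-set} to trade the $\chi$-eigenspace on the (singular) curve $X$ for one on its normalization, giving
$$
\dim_k H^0(X,\omega_X)_\chi = \dim_k H^0(\hat X,\Omega_{\hat X}(\hat S_X^\chi))_\chi .
$$
This is exactly the step where the smoothness of $Y$ enters: it guarantees for each node a group element exchanging the two points of the pair, which is what forces the residue-cancellation criterion defining $\hat S_X^\chi$.

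Next I would descend to $Y$. Proposition~\ref{Kani equality}(\ref{G4}) identifies the eigenspace on $\hat X$ with $f_\chi\cdot\hat\pi^* H^0(Y,\Omega_Y(D_\chi))$ for $D_\chi=\lfloor n^{-1}\hat\pi_*(\hat S_X^\chi+(f_\chi)+R_{\hat\pi})\rfloor$; since multiplication by $f_\chi$ and the pullback $\hat\pi^*$ are injective $k$-linear maps, they preserve dimension, so it suffices to compute $\dim_k H^0(Y,\Omega_Y(D_\chi))$. As $Y$ is a smooth curve, the classical Riemann--Roch theorem gives
$$
\dim_k H^0(Y,\Omega_Y(D_\chi)) = \dim_k H^0(Y,\mathcal O_Y(-D_\chi)) + \deg D_\chi + g_Y - 1 .
$$

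It then remains only to evaluate the two variable terms, and this is precisely the content of Lemmas~\ref{vanish} and~\ref{deg}. By Lemma~\ref{vanish} the space $H^0(Y,\mathcal O_Y(-D_\chi))$ is $1$-dimensional when $\chi=1_G$ and zero otherwise, so its dimension is exactly $\langle\chi,1_G\rangle$; by Lemma~\ref{deg} one has $\deg D_\chi = m_\chi(\hat S_X^\chi)$, independent of the choice of $f_\chi$. Substituting both into the displayed Riemann--Roch equality produces
$$
\dim_k H^0(X,\omega_X)_\chi = g_Y - 1 + m_\chi(\hat S_X^\chi) + \langle\chi,1_G\rangle ,
$$
which is the asserted formula. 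For the final clause I would use the remark after Proposition~\ref{chi-set} that $\hat S_X^\chi=\varnothing$ when $\chi=1_G$; a direct check of Definition~\ref{m(S)} then gives $m_{1_G}(\varnothing)=0$, so the formula collapses to $g_Y$, recovering Proposition~\ref{G-invariant}.

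Because every input is already in hand, I expect no real obstacle in the theorem proper; it is an assembly step. The only point needing a word of justification is the reinterpretation $\dim_k H^0(Y,\mathcal O_Y(-D_\chi))=\langle\chi,1_G\rangle$, which simply repackages the two cases of Lemma~\ref{vanish} using that a $1$-dimensional character satisfies $\langle\chi,1_G\rangle=1$ iff $\chi=1_G$. The actual difficulty of the result lives entirely upstream: the residue analysis of Proposition~\ref{chi-set}, the positivity argument underlying the vanishing in Lemma~\ref{vanish}, and the ramification-module bookkeeping of Lemma~\ref{deg}.
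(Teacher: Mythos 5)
Your proposal is correct and takes essentially the same route as the paper: the paper's own proof of Theorem~\ref{irr CW} is exactly the assembly you describe, namely the isomorphism of Proposition~\ref{chi-set}, the descent to $Y$ via Proposition~\ref{Kani equality}~(\ref{G4}), Riemann--Roch on $Y$, and the substitution of Lemma~\ref{vanish} (giving the $\langle\chi,1_G\rangle$ term) and Lemma~\ref{deg} (giving $\deg D_{\chi}=m_{\chi}(\hat{S}_X^{\chi})$). Your closing check that $m_{1_G}(\varnothing)=0$, recovering $\dim_k H^{0}(X,\omega_X)^G=g_Y$, likewise matches the paper's concluding remark.
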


\begin{example}[Hyperelliptic stable curves]
	A hyperelliptic stable curve $C$ is a stable curve with a hyperelliptic involution
	$J:C\rightarrow C$, which is an order $2$ automorphism satisfying $C/ \langle J  \rangle=\mathbb{P}^1$. 
	
	Suppose $C$ is an irreducible hyperelliptic stable curve with $N(\geq 1)$ nodes.
	Let $\hat{C}$ be the normalization of $C$ with genus $g$, then $\hat{\pi}:\hat{C}\rightarrow \mathbb{P}^1$ has $2g+2$ fixed(ramification) points, which are all of ramification index $2$, hence $\#Bl(\mathbb{P}^1)=2g+2$.
	
	 There are no branch points in $\hat{\pi}(\hat{S}_C)$, and the Galois group $G=\langle J  \rangle\cong \mathbb{Z}_2$ has two irreducible representations $1_G$ and $\chi^-$ where $\chi^-(J)=-1$.

	For $1_G$, we have 
	$$
	\operatorname{dim}_k  H^{0}(C, \omega_{C})^G=g({\mathbb{P}^1})=0.
	$$
	
	For $\chi^-$, we have $\hat{S}_C^{\chi^-}=\hat{S}_C$, hence $s_{\chi^-}=N$.
	
	And for any fixed point $P$,  the induced character
	$
	\theta_P: G_P= \langle J  \rangle \rightarrow k^*, J\mapsto -1
	$
	is the generator of $\operatorname{Hom}(\mathbb{Z}_2,k^*)$. So we have
	$
	\left\langle \chi^-, R_{G, Q}\right\rangle_G=1$ for all $Q\in Bl(\mathbb{P}^1)$
	and 
	$
	\left\langle \chi^-, R_G\right\rangle_G=2g+2,
	$ which gives
	\begin{align}
			m_{\chi^-}(\hat{S}_C^{\chi^-})&=s_{\chi^-}+\sum_{Q\in Bl(\mathbb{P}^1)}\left\lfloor\frac{e_Q-1}{e_Q}+\frac{1}{2}\left\langle \chi_{-1}, R_{G, Q}\right\rangle_G\right\rfloor-\frac{1}{2}\left\langle \chi^-, R_G\right\rangle_G\\
			&=N+2g+2-(g+1)=p_a(C)+1.
	\end{align}
	Hence $$
		\operatorname{dim}_k  H^{0}(C, \omega_{C})_{\chi^-}=g({\mathbb{P}^1})-1+ m_{\chi^-}(\hat{S}_C^{\chi^-})=p_a(C).
	$$

\end{example}
	
 \section{Nodal curves with several irreducible components}
 In this section, let $X$ be a connected nodal curve and $X=\cup^{d}_{i=1} X_i$ be the decomposition of irreducible components. Consider 
 $$
 \alpha:\coprod X_i\rightarrow X,
 $$
the partial normalization at the intersection locus, then we have the immersion
 \begin{align}
 	H^0(X,\omega_X) \hookrightarrow \oplus_{i=1}^{d} H^{0}(X_i, \omega_{X_i}(I_i)),~\varphi \mapsto (\varphi|_{X_i}),
 \end{align}
 where $I_i$ is the set of intersection points of each $X_i$.
\subsection{Reduction}

Let $G$ be a finite subgroup of $\operatorname{Aut}(X)$ and
 $\pi:X \rightarrow X/G=Y$ be the quotient map. Suppose $Y$ is irreducible, then $G$ acts transitively on $\{X_1,\cdots, X_d\}$ and all these components are isomorphic. Let $G_i$ be the stablizer subgroup of $G$ at $X_i$ and note that the canonical map $X_i/G_i\rightarrow X/G=Y$ is an isomorphism.

\begin{prop}\label{reduction}
	Given a $1$-dimensional character $\chi$ of $G$, we have a commutative diagram:
	\begin{equation}
		\begin{tikzcd}
			H^0(X,\omega_X)_{\chi}\arrow[r, hookrightarrow]\arrow[dr,rightarrow] & \big[\oplus_{i=1}^{d} H^{0}(X_i, \omega_{X_i}(I_i))\big]_{\chi}~,\arrow[d,rightarrow,"p_1"]
			&  \varphi \arrow[r, mapsto]\arrow[dr,mapsto] & (\varphi|_{X_i})_{i=1}^{d} \arrow[d,mapsto]\\
			&H^{0}(X_1, \omega_{X_1}(I_1))_{\chi_1} &  &\varphi|_{X_1}~,
		\end{tikzcd}
	\end{equation}
where $\chi_1$ is the restriction of $\chi$ in $G_1$. Moreover, the projection $p_1$ is an isomorphism.
\end{prop}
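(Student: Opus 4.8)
The plan is to recognize the ambient module $\bigoplus_{i=1}^{d} V_i$, with $V_i := H^{0}(X_i,\omega_{X_i}(I_i))$, as the induced module $\operatorname{Ind}_{G_1}^{G} V_1$, and then to extract the $\chi$-eigenspace via Frobenius reciprocity. Since $G$ acts transitively on $\{X_1,\dots,X_d\}$ with $\operatorname{Stab}_G(X_1)=G_1$, each $\sigma\in G$ carries $X_i$ isomorphically onto $X_{\rho(\sigma)(i)}$ for a permutation $\rho(\sigma)$ of the indices, and the right $G$-action on $\bigoplus_i V_i$ is given by $(\varphi\cdot\sigma)|_{X_i}=(\sigma|_{X_i})^{*}(\varphi|_{X_{\rho(\sigma)(i)}})$. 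In particular $G_1$ preserves the summand $V_1$ and acts on it through $\sigma\mapsto(\sigma|_{X_1})^{*}$; fixing coset representatives $g_1=e,g_2,\dots,g_d$ with $g_j(X_1)=X_j$ then identifies $\bigoplus_i V_i$ with $\operatorname{Ind}_{G_1}^{G}V_1$.

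The commutativity of the triangle is immediate: the horizontal arrow is the restriction of the $G$-equivariant embedding $H^{0}(X,\omega_X)\hookrightarrow\bigoplus_i V_i$ to $\chi$-eigenspaces, and composing it with the coordinate projection $p_1$ sends $\varphi$ to $\varphi|_{X_1}$, which is exactly the diagonal arrow. One only has to observe that $\varphi|_{X_1}$ genuinely lies in $(V_1)_{\chi_1}$: for $\sigma\in G_1$ one has $\rho(\sigma)(1)=1$, so restricting the relation $\varphi\cdot\sigma=\chi(\sigma)\varphi$ to the first summand yields $(\sigma|_{X_1})^{*}(\varphi|_{X_1})=\chi_1(\sigma)\,\varphi|_{X_1}$.

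It remains to prove that $p_1\colon \big(\bigoplus_i V_i\big)_{\chi}\to (V_1)_{\chi_1}$ is an isomorphism. For injectivity I note that any $v=(v_i)$ in the $\chi$-eigenspace is determined by $v_1$: evaluating $v\cdot g_j=\chi(g_j)v$ on the first summand gives $(g_j|_{X_1})^{*}v_j=\chi(g_j)v_1$, whence $v_j=\chi(g_j)\big((g_j|_{X_1})^{*}\big)^{-1}v_1$. For surjectivity I would start from an arbitrary $w\in(V_1)_{\chi_1}$, define $v_j:=\chi(g_j)\big((g_j|_{X_1})^{*}\big)^{-1}w$ (so $v_1=w$), and verify that the resulting tuple is a genuine $\chi$-eigenvector.

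This last verification is where the only real work lies, and where the hypothesis that $w$ is a $\chi_1$-eigenvector is essential. First one checks that the formula is independent of the choice of coset representative: replacing $g_j$ by $g_j h$ with $h\in G_1$ introduces the operator $\big((h|_{X_1})^{*}\big)^{-1}$ together with the scalar $\chi(h)$, and since $(h|_{X_1})^{*}w=\chi_1(h)w$ these cancel. One then checks $v\cdot\sigma=\chi(\sigma)v$ for all $\sigma\in G$, which is the standard bookkeeping identifying the $\chi$-eigenspace of an induced module with the $\chi_1$-eigenspace of the inducing module. The main obstacle is thus purely the careful handling of the left/right action conventions and the well-definedness of this extension; once that is settled, injectivity and surjectivity give the isomorphism, and in particular $\dim_k\big(\bigoplus_i V_i\big)_{\chi}=\dim_k (V_1)_{\chi_1}$.
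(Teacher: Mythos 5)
Your proposal is correct and follows essentially the same route as the paper: both show $p_1$ is injective because a $\chi$-eigenvector $(\varphi_i)$ is forced to equal $\big(\chi(T_i)^{-1}T_i\varphi_1\big)_i$ for any choices $T_i\colon X_i\to X_1$, and surjective by constructing exactly this tuple from a given $\chi_1$-eigenvector $\varphi_1$, with well-definedness (independence of the choice of $T_i$, equivalently of coset representative) resting on the $\chi_1$-eigenvector hypothesis. Your packaging of the target as $\operatorname{Ind}_{G_1}^{G}V_1$ with Frobenius reciprocity is a cosmetic reformulation of the same computation, not a different argument.
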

\begin{proof}
	Fix some $(\varphi_1,\cdots,\varphi_d)\in \big[\oplus_{i=1}^{d} H^{0}(X_i, \omega_{X_i}(I_i))\big]_{\chi}$. Since $G$ acts on $\{X_1,\cdots, X_d\}$ transitively, then we always have some $T_i:X_i \rightarrow X_1$ and
	$T_i \varphi_1=\chi(T_i)\varphi_i$, hence
	$$
	(\varphi_1,\cdots,\varphi_d)=(~\varphi_1,\chi(T_2)^{-1}T_2\varphi_1,\cdots,\chi(T_d)^{-1}T_d\varphi_1~),
	$$
	namely $(\varphi_1,\cdots,\varphi_d)$ is uniquely determined by $\varphi_1$. Note that $T_1 \varphi_1=\chi(T_1)\varphi_1$ for any $T_1\in G_1$, which implies 
	$\varphi_1 \in H^{0}(X_1, \omega_{X_1}(I_1))_{\chi_1}$, so the projection to the first component $p_1$ is injective.
	
	Conversly, we want to show $(~\varphi_1,\chi(T_2)^{-1}T_2\varphi_1,\cdots,\chi(T_d)^{-1}T_d\varphi_1~)$
	 is the preimgae of $\varphi_1 \in H^{0}(X_1, \omega_{X_1}(I_1))_{\chi_1}$. Note that for any two $\sigma,\tau:X_i \rightarrow X_1$, we have 
	$
	\chi(\sigma)^{-1}\sigma\varphi_1=\chi(\tau)^{-1}\tau\varphi_1.
	$
	Hence for any $T\in G$, we have
	\begin{align}
		T(~\varphi_1,\chi(T_2)^{-1}T_2\varphi_1,\cdots,\chi(T_d)^{-1}T_d\varphi_1~)=\chi(T)(~\varphi_1,\chi(T_2)^{-1}T_2\varphi_1,\cdots,\chi(T_d)^{-1}T_d\varphi_1~).
	\end{align}

\end{proof}

\begin{rmk}\label{irr iso}
By the same arguments as in \emph{Proposition \ref{chi-set}} and \emph{Proposition \ref{reduction}}, we
set 
\begin{equation}
	I_i^{\chi}=\left\{P\in I_i\mid \exists \tau\in G_{P}-G_i\text{ s.t. }\chi(\tau)=-1 \right\}
\end{equation}
to be those intersection points that could be the poles of $\varphi|_{X_i}$ for $\varphi\in 	H^0(X,\omega_X)_{\chi}$,
and then we have the isomorphisms
\begin{equation}
	H^0(X,\omega_X)_{\chi}  \overset{\sim}{\rightarrow} [\oplus_{i=1}^{d} H^{0}(X_i, \omega_{X_i}(I_i^{\chi}))]_{\chi}
	\overset{\sim}{\rightarrow} H^{0}(X_1, \omega_{X_1}(I_1^{\chi}))_{\chi_1}.
\end{equation}
\end{rmk}
So our research obeject has been reduced to the irreducible nodal curve acting by the subgroup $G_1$ on $X_1$.

\subsection{Chevalley-Weil formula for connected nodal curves}
With the notations above, note that $I_i^{\chi}=\varnothing$ when $\chi=1_G$. By \emph{Proposition \ref{G-invariant}}, if the ramification indexes of any pair $\{P_1,P_2\}\subseteq \hat{S}_{X_1}$ are equal for $\pi_1:X_1 \rightarrow X_1/G_1=Y$, then we have
\begin{equation}
	H^0(X,\omega_X)^G  \overset{\sim}{\rightarrow} H^{0}(X_1, \omega_{X_1})^{G_1} \overset{\sim}{\rightarrow}H^{0}(Y, \omega_{Y}).
\end{equation}
In this case, we have $\operatorname{dim}_k H^0(X,\omega_X)^G=p_a(Y)$.\\

Assume that $Y$ is smooth for the rest of this section.

Let $\hat{\pi}_1:\hat{X_1} \rightarrow Y$ be the normalization of $\pi_1$, $Bl(Y)$ the branch locus, $R_{G_1}$ the ramification module of $\hat{\pi}_1$, and $R_{{G_1}, Q}$ the ramification module of $Q\in Y$.

Suppose $\hat{S}_{X_1}^{\chi_1}$ is the singular $\chi_1$-set of $X_1$ in \emph{Proposition \ref{chi-set}}, then we have the isomorphisms
\begin{align}
	H^0(X,\omega_X)_{\chi}\overset{\sim}{\rightarrow}H^{0}(X_1, \omega_{X_1}(I_1^{\chi}))_{\chi_1} \overset{\sim}{\rightarrow} H^{0}(\hat{X_1}, \Omega_{\hat{X}_1}(\hat{S}_{X_1}^{\chi_1}\cup I_1^{\chi}))_{\chi_1}.
\end{align}
Note that $n_1:=\# G_1=n/d$ and by \emph{Proposition \ref{Kani equality} (\ref{G4})} again, we have
$$
H^{0}(\hat{X_1}, \Omega_{\hat{X_1}}(\hat{S}_{X_1}^{\chi_1}\cup I_1^{\chi}))_{\chi_1} 
=f_{\chi_1} \cdot \hat{\pi}_1^* H^{0}(Y,\Omega_Y\left\lfloor\ n_1^{-1} \pi_*\left(\hat{S}_{X_1}^{\chi_1}\cup I_1^{\chi}+(f_{\chi_1})+R_{\pi_1}\right)\right\rfloor),
$$
where $f_{\chi_1} \in K(X_1)^*$ is such that $\sigma f_{\chi_1}=\chi_1(\sigma) f_{\chi_1},~ \forall \sigma \in G_1$.
The same argument in Lemma \ref{deg} shows that 
\begin{equation}
	\operatorname{deg} \left\lfloor\ n_1^{-1} \pi_*\left(\hat{S}_{X_1}^{\chi_1}\cup I_1^{\chi}+(f_{\chi_1})+R_{\pi_1}\right)\right\rfloor=m_{\chi_1}(\hat{S}_{X_1}^{\chi_1}\cup I_1^{\chi})
\end{equation}
where by \emph{Definition \ref{m(S)}},
	$$
	m_{\chi_1}(\hat{S}_{X_1}^{\chi_1}\cup I_1^{\chi})=\#\hat{\pi}_1(\hat{S}_{X_1}^{\chi_1}\cup I_1^{\chi})+\sum_{Q \notin \hat{\pi}_1(\hat{S}_{X_1}^{\chi_1}\cup I_1^{\chi})}\left\lfloor\frac{e_Q-1}{e_Q}+\frac{d}{n}\left\langle \chi_1, R_{G_1, Q}\right\rangle_{G_1}\right\rfloor-\frac{d}{n}\langle \chi_1, R_{G_1}\rangle_{G_1}.
$$

By the same argument in \emph{Lemma \ref{vanish}}, and Riemann-Roch Theorem, we have 
\begin{thm}[The Chevalley-Weil formula on connected nodal curves]\label{sereral CW}
	Let $X$ be a connected nodal curve of $d$ irreducible components and $G$ a finite group of order $n$ acting on $X$.
	
	 Assume the quotient curve $Y=X/G$ is smooth(hence irreducible), then we have a canonical map $X_1 /rightarrow =X_1/G_1=Y$, where $X_1$ is an irreducible component and $G_1$ is the  stablizer subgroup of $G$ at $X_1$.
	
	With the notations above, the multiplicity of a $1$-dimensional character $\chi$ is given by
	\begin{equation}
	\operatorname{dim}_k  H^{0}(X, \omega_{X})_{\chi}=g_Y-1+ m_{\chi_1}(\hat{S}_{X_1}^{\chi_1}\cup I_1^{\chi})+ \delta_{\chi},       
	\end{equation}
where $\delta_{\chi}=0$ or $1$. And $\delta_{\chi}=1$ if and only if $I_1^{\chi}=\varnothing$ and $\chi_1=1_{G_1}$. In particular, when $\chi=1_G$, we have
	$$
	\operatorname{dim}_k  H^{0}(X, \omega_{X})^G=g_Y.
	$$
\end{thm}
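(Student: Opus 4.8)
The plan is to assemble the chain of isomorphisms and degree computations already prepared in the discussion preceding the theorem, then to run a Riemann--Roch computation on the smooth quotient $Y$, the only genuinely new input being a characterization of when the correction term $\delta_\chi$ is nonzero.

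First I would invoke the reduction of Remark \ref{irr iso} together with the isomorphism displayed just before the theorem, namely $H^0(X,\omega_X)_\chi \cong H^{0}(\hat X_1, \Omega_{\hat X_1}(\hat S_{X_1}^{\chi_1}\cup I_1^\chi))_{\chi_1}$, thereby transferring the whole problem to the single component $X_1$ under the action of its stabilizer $G_1$ (with $n_1=\#G_1=n/d$). Next I would apply Proposition \ref{Kani equality}(\ref{G4}) to write this space as $f_{\chi_1}\cdot\hat\pi_1^* H^{0}(Y,\Omega_Y(D_{\chi_1}))$, where $D_{\chi_1}=\lfloor n_1^{-1}\pi_*(\hat S_{X_1}^{\chi_1}\cup I_1^\chi+(f_{\chi_1})+R_{\pi_1})\rfloor$, and use the verbatim computation of Lemma \ref{deg} to identify $\deg D_{\chi_1}=m_{\chi_1}(\hat S_{X_1}^{\chi_1}\cup I_1^\chi)$. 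Riemann--Roch on the smooth curve $Y$ then gives $\dim_k H^{0}(Y,\Omega_Y(D_{\chi_1}))=\deg D_{\chi_1}+g_Y-1+\dim_k H^{0}(Y,\mathcal O_Y(-D_{\chi_1}))$, which already exhibits the shape $g_Y-1+m_{\chi_1}(\cdots)+\delta_\chi$ with $\delta_\chi:=\dim_k H^{0}(Y,\mathcal O_Y(-D_{\chi_1}))$.

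The main step is to pin down $\delta_\chi$, for which I would repeat the argument of Lemma \ref{vanish} applied to the $G$-stable set $S=\hat S_{X_1}^{\chi_1}\cup I_1^\chi$. The degree estimate there shows $\deg D_{\chi_1}>0$ whenever $S\neq\varnothing$, so $H^{0}(Y,\mathcal O_Y(-D_{\chi_1}))=0$ in that case; and when $S=\varnothing$ one has $\delta_\chi=1$ precisely when $D_{\chi_1}$ is principal, i.e. exactly when $\chi_1=1_{G_1}$. The subtle point I would stress is the asymmetry between the two pieces of $S$: by the remark following Proposition \ref{chi-set}, $\chi_1=1_{G_1}$ already forces $\hat S_{X_1}^{\chi_1}=\varnothing$, but it does \emph{not} force $I_1^\chi=\varnothing$, since $I_1^\chi$ records intersection points moved by elements $\tau\in G_P\setminus G_1$ with $\chi(\tau)=-1$, data depending on $\chi$ on all of $G$ and not merely on $\chi_1$. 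Hence the clean necessary and sufficient condition for $\delta_\chi=1$ is $\chi_1=1_{G_1}$ \emph{and} $I_1^\chi=\varnothing$, exactly as stated.

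Finally I would specialize to $\chi=1_G$: then $\chi_1=1_{G_1}$ and $\chi(\tau)=1\neq-1$ for all $\tau$, so $I_1^\chi=\varnothing$ and $\delta_{1_G}=1$; moreover $\langle 1_{G_1},R_{G_1,Q}\rangle=0$ for every $Q$ gives $m_{1_{G_1}}(\varnothing)=\sum_Q\lfloor (e_Q-1)/e_Q\rfloor=0$, so the formula collapses to $\dim_k H^{0}(X,\omega_X)^G=g_Y$, recovering Proposition \ref{G-invariant}. I expect the vanishing/degree estimate of the third step, and the careful bookkeeping distinguishing the roles of $\hat S_{X_1}^{\chi_1}$ and $I_1^\chi$, to be the only place requiring real care; everything else is a direct transcription of the irreducible-case machinery through the reduction of Remark \ref{irr iso}.
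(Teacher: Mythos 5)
Your proposal is correct and follows essentially the same route as the paper: reduction to $(X_1,G_1)$ via Proposition \ref{reduction}/Remark \ref{irr iso}, Kani's formula (\ref{G4}), the degree computation of Lemma \ref{deg}, and the vanishing argument of Lemma \ref{vanish} combined with Riemann--Roch. Your explicit justification of the condition for $\delta_\chi=1$ (noting that $\chi_1=1_{G_1}$ forces $\hat S_{X_1}^{\chi_1}=\varnothing$ but not $I_1^{\chi}=\varnothing$) is exactly the bookkeeping the paper compresses into ``by the same argument in Lemma \ref{vanish}.''
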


Note that this theorem is exactly the direct generalization of Theorem \ref{irr CW}, since if $d=1$, then $I_1^{\chi}=\varnothing$.

\begin{example}
	Let $C=C_1\cup C_2$ be a hyperelliptic stable curve, where $C_1\approx C_2\approx \mathbb{P}^1$, and they intersect in $\#(C_1\cap C_2)=m(>2)$ points,
	then $p_a(C)=m-1$. 
	Consider the hyperelliptic involution $J$ permuting $C_1$ and $C_2$, we have $\pi: C\rightarrow C/ \langle J \rangle=\mathbb{P}^1$.
	
	The covering map $\pi_1:C_1\rightarrow \mathbb{P}^1$ is an identity.
	For the representation $\chi^-$, we have $\chi^-_1=id$. Hence $\hat{S}_{C_1}^{id}=\varnothing$ and 
	$m_{id}(\hat{S}_{C_1}^{id}\cup I_1^{\chi^-})=\#\pi(I_1^{\chi^-})=m$.
	
	So by \emph{Theorem \ref{sereral CW}}, we have
	\begin{equation}
	\operatorname{dim}_k  H^{0}(C, \omega_{C})_{\chi^-}=g_{\mathbb{P}^1}-1+m=m-1=p_a(C),
	\end{equation}
   and $\operatorname{dim}_k  H^{0}(C, \omega_{C})^G=p_{a}({\mathbb{P}^1})=0$.
\end{example}


\begin{thebibliography}{990}
	
	\bibitem{Curve}{Arbarello E, Cornalba M, Griffiths P A. Geometry of algebraic curves: volume II with a contribution by Joseph Daniel Harris. Springer Berlin Heidelberg, 2011.}
	
	\bibitem{BC}{Bleher F M, Chinburg T, Kontogeorgis A. 
		\textit{Galois structure of the holomorphic differentials of curves.} Journal of Number Theory, 2020, \textbf{216}: 1-68.}
	
	\bibitem{CW}{Chevalley C, Weil A, Hecke E. \textit{Über das verhalten der integrale 1. gattung bei automorphismen des funktionenkörpers. Abhandlungen aus dem mathematischen Seminar der Universität Hamburg.} Springer-Verlag, 1934, \textbf{10(1)}: 358-361.}
	
	\bibitem{J}{Garnek J. \textit{$ p $-group Galois covers of curves in characteristic $ p$}. arXiv:2201.04861, 2022.}
	
	
	\bibitem{hartshorne}{Hartshorne R. Algebraic geometry. Springer, 2013.}
	
	\bibitem{HZ}{Hurwitz A. \textit{Über algebraische Gebilde mit eindeutigen Transformationen in sich.} Mathematische Annalen, 1892, \textbf{41(3)}: 403-442.}
	
	\bibitem{Kani}{Kani E. \textit{The Galois-module structure of the space of holomorphic differentials of a curve.} Journal für die reine und angewandte Mathematik, 1986, \textbf{367}: 187-206.}
	
	\bibitem{Kock}{Köck B. \textit{Galois structure of Zariski cohomology for weakly ramified covers of curves.} American Journal of Mathematics, 2004, \textbf{126(5)}: 1085-1107.}
	
	
	\bibitem{Liu}{Liu Q. Algebraic Geometry and Arithmetic Curves. Oxford University Press, 2006.}
	
	\bibitem{MW}{Marquès S, Ward K. \textit{Holomorphic differentials of certain solvable covers of the projective line over a perfect field.} Mathematische Nachrichten, 2018, \textbf{291(13)}: 2057-2083.}
	
	\bibitem{Mumford}{Mumford D, with appendices by Ramanujam C P, Manin J I. Abelian varieties. Oxford university press, 1974.}
	
	\bibitem{N1}{Nakajima S. \textit{On Galois module structure of the cohomology groups of an algebraic variety.} Inventiones mathematicae, 1984, \textbf{75(1)}: 1-8.}
	
	\bibitem{N2}{Nakajima S. \textit{Galois module structure of cohomology groups for tamely ramified coverings of algebraic varieties.} Journal of Number Theory, 1986, \textbf{22(1)}: 115-123.}
	
	\bibitem{V}{Valentini R C, Madan M L. \textit{Automorphisms and holomorphic differentials in characteristic p.} Journal of Number Theory, 1981, \textbf{13(1)}: 106-115.}
	
	\bibitem{W}{Weil A. \textit{Über matrizenringe auf Riemannschen flächen und den Riemann-Rochsehen satz.} Abhandlungen aus dem Mathematischen Seminar der Universität Hamburg. Springer-Verlag, 1935, \textbf{11(1)}: 110-115.}

	
	
	
	

	

\end{thebibliography}
\end{document}